\documentclass[reqno]{amsart}

\usepackage[sorting=none, sorting=nyt, maxbibnames=99, backend=biber]{biblatex}

\renewbibmacro{in:}{}
\bibliography{references.bib}
\usepackage{amssymb}
\usepackage{calrsfs}
\usepackage{graphics,graphicx,mathrsfs}
\usepackage{enumerate}
\usepackage{enumitem}
\usepackage{textcomp}
\usepackage{bbold}
\usepackage{url}
\usepackage{xcolor}
\definecolor{vio}{rgb}{0.54, 0.17, 0.89}
\usepackage[ruled, lined, linesnumbered, longend]{algorithm2e}
\usepackage{hyperref}
\usepackage{titlesec}
\usepackage{todonotes}
\usepackage[mathcal]{eucal}

\newtheorem{theorem}{Theorem}[section]
\newtheorem{lemma}[theorem]{Lemma}

\newtheorem{proposition}[theorem]{Proposition}

\newtheorem{corollary}[theorem]{Corollary}
\numberwithin{equation}{section}

\theoremstyle{remark}
\newtheorem*{remark}{Remark}

\theoremstyle{definition}
\newtheorem*{definition}{Definition}

\titleformat{\section}
  {\normalfont\large\bfseries\centering}{\thesection}{1em}{}

\titleformat{\subsection}
  {\normalfont\bfseries}{\thesubsection}{1em}{}  

\DeclareMathOperator{\Z}{\mathbb{Z}}

\DeclareMathOperator{\N}{\mathbb{N}}
\DeclareMathOperator{\G}{\mathscr{G}}
\DeclareMathOperator{\li}{\mathrm{li}}
\DeclareMathOperator{\ord}{\mathrm{ord}}

\newcommand{\mcS}{\mathcal{S}}

\newcommand{\mcA}{\mathcal{A}}
\newcommand{\msA}{\mathscr{A}}
\newcommand{\mcE}{\mathcal{E}}
\newcommand{\mcT}{\mathcal{T}}

\def\reals{\hbox{\rm I\kern-.18em R}}
\def\complexes{\hbox{\rm C\kern-.43em
\vrule depth 0ex height 1.4ex width .05em\kern.41em}}
\def\field{\hbox{\rm I\kern-.18em F}} 

\let\svthefootnote\thefootnote
\newcommand\freefootnote[1]{%
  \let\thefootnote\relax%
  \footnotetext{#1}%
  \let\thefootnote\svthefootnote%
}

\newenvironment{section*}[2][A]{
  \section*{#2}
  \renewcommand\thesection{#1}
  \setcounter{theorem}{0}}{}

\allowdisplaybreaks

\begin{document}

\title[The least prime with a given primitive root]{The least prime with a given primitive root and a variant of the larger sieve}

\author{Daniel R. Johnston}
\author{Sunil Naik}
\address{Max Planck Institute for Mathematics, Vivatsgasse 7, 53111 Bonn, Germany}
\email{johnston@mpim-bonn.mpg.de}
\address{Department of Mathematics, Indian Institute of Technology Palakkad, Kerala 678623, India}
\email{sunilnaik@iitpkd.ac.in}
\date\today

\begin{abstract}
    For an integer $g$ not equal to $-1$ or a square, we study the least prime $p_g$ such that $g$ is a primitive root modulo $p_g$. We show that, assuming the Generalised Riemann Hypothesis, all such $g$ with $|g|\in[N,2N]$ have $p_g\leq(\log |g|)^{3.44}$ with at most $O((\log N)^{2.44})$ exceptions. This can be directly compared to the uniform bound $p_g\leq(\log |g|)^{19}$ proven recently by Fan and Pollack. To obtain our result, we prove a new variant of Gallagher's larger sieve, which may be of independent interest. In addition to our conditional result, we discuss other ``almost all" bounds for $p_g$ that hold unconditionally.
\end{abstract}

\maketitle 

\freefootnote{\textit{Corresponding author}: Daniel Johnston (johnston@mpim-bonn.mpg.de).}
\freefootnote{\textit{Affiliation}: Max Planck Institute for Mathematics, Bonn, Germany.}
\freefootnote{\textit{Key phrases}: primitive root, generalised Riemann hypothesis, exceptional set, larger sieve.}
\freefootnote{\textit{2020 Mathematics Subject Classification}: 11A07, 11N36.}

\section{Introduction} 
\subsection{Overview and a conditional result}
For a prime number $p$, we call an integer $g$ a \emph{primitive root} modulo $p$, if $g$ generates the cyclic group $(\Z/p\Z)^{\times}$. If $p>3$, any such $g$ must belong to the set
\begin{equation*}
    \mathscr{G}:=\{g\in\Z:g\neq -1\ \text{and}\ g\ \text{not a square}\}.
\end{equation*}

A 1927 conjecture of Artin asks whether each $g\in\G$ is a primitive root for infinitely many primes $p$, and also suggests an asymptotic for the number of such primes $p\leq x$ for a fixed $g$. For a comprehensive survey on Artin's primitive root conjecture, see~\cite{moree2012}. Most notably, a celebrated result of Hooley~\cite{hooley1967} gives that Artin's primitive root conjecture holds under the assumption of the Generalised Riemann Hypothesis (GRH) for Dedekind zeta functions.

In the context of Artin's primitive root conjecture, it is natural to study the quantity $p_g$, defined as follows.
\begin{definition}
    For a given $g\in\G$, let $p_g$ be the smallest prime $p$ such that $g$ is a primitive root modulo $p$. If such a $p$ does not exist, then set $p_g=\infty$.
\end{definition}
The quantity $p_g$ can be viewed as the dual concept of the widely studied quantity $g_p$, which corresponds to the least primitive root\footnote{In the literature, further restrictions are often placed on $g$. For instance, it is common to try to bound the least \emph{prime} primitive root.} $g$ modulo a fixed prime $p$ (see e.g.~\cite{burgess1962,martin1997,martin1998,Shoup1992}).
Now, unconditionally, it is not known whether $p_g<\infty$ for all $g\in\G$. However, assuming GRH, Fan and Pollack~\cite{Fan2025} modified Hooley's techniques to show
\begin{equation}\label{eq19}
    p_g\ll (\log |g|)^{19}\quad\text{(assuming GRH)},
\end{equation}
for all $g\in\G$. We remark that~\eqref{eq19} is far from what should be possible. In particular, Fan and Pollack also provide heuristics to conjecture that $p_g\ll \log|g|\log\log|g|$. 

We also note that upon imposing conditions on $g$, Fan and Pollack's result~\eqref{eq19} improves further. For instance, they obtain a better bound of $(\log|g|)^{16}$ if $g$ is positive, and $(\log|g|)^{13}$ if $g$ is square-free (see~\cite[p.~20]{Fan2025}). Trivially, one also has $p_g=2$ whenever $g$ is odd.

Our main result, given as follows, shows that in fact one can substantially improve~\eqref{eq19} upon removing a very small subset of $\G$.

\begin{theorem}\label{conthm}
    Let $N$ be sufficiently large and fix $B>2+2e^{-1/3}=3.433\ldots\,.$ Then, assuming GRH, all $g\in\G$ with $|g|\in[N,2N]$ satisfy
    \begin{equation}\label{almostpgeq}
        p_g\leq (\log |g|)^{B}
    \end{equation}
    with at most $O\big((\log N)^{B-1}\big)$ exceptions.
\end{theorem}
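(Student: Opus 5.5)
The plan is to split the condition ``$g$ is not a primitive root modulo $p$'' according to the size of the prime $q\mid p-1$ for which $g$ is a $q$-th power residue. Small $q$ will be handled for each individual $g$ by GRH, and large $q$ will be handled on average over $g$ by a new variant of Gallagher's larger sieve. Put $x=(\log 2N)^B$ and introduce a parameter $y=(\log N)^{\theta}$, with $\theta$ to be optimised. Recall that $g$ fails to be a primitive root modulo $p\nmid 2g$ exactly when $g$ is a $q$-th power modulo $p$ for some prime $q\mid p-1$.

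\textbf{Step 1 (small $q$, under GRH).} For each fixed $g\in\G$ with $|g|\in[N,2N]$, I will show that many primes $p\le x$ have the property that $g$ is not a $q$-th power modulo $p$ for any prime $q\le y$ dividing $p-1$. By inclusion--exclusion this count is
\[
\sum_{m\mid P(y)}\mu(m)\,\pi\bigl(x;\mathbb{Q}(\zeta_m,g^{1/m})\bigr),
\]
where $P(y)$ is the product of the primes up to $y$. I will apply the GRH form of Chebotarev (Lagarias--Odlyzko) to each Kummer field. The error term is about $x^{1/2}\log(m|g|)$, which is roughly $x^{1/2}\log N$, and this is acceptable against $x/\log x$ as soon as $B>2$. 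To keep the number of fields under control, I will truncate the moduli $m$ Hooley-style (only $m\le y$, or use a Brun-type sieve truncation) instead of summing over all $m\mid P(y)$. The expected outcome is a set $\mathcal P_g$ of primes in $(y,x]$ of size $\gg x/\log x$, on which $g$ can fail to be a primitive root only through some prime $q>y$ with $q\mid p-1$. The entanglement corrections and the cases where $g$ is a perfect power or $-g$ is a square only affect the constants.

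\textbf{Step 2 (large $q$, larger sieve variant).} Suppose $g$ is exceptional. Then for every $p\in\mathcal P_g$, $g$ lies modulo $p$ in a union of subgroups of index $q$, taken over primes $q\mid p-1$ with $q>y$. Summing over such $q$, this union has size at most $(p-1)\sum_{q\mid p-1,\,q>y}1/q$, which is small. The difficulty is that Gallagher's larger sieve
\[
|\mathcal A|\le \frac{\sum_p\log p-\log N}{\sum_p \log p/\nu(p)-\log N}
\]
needs the same set of primes for every element of $\mathcal A$, whereas here $\mathcal P_g$ depends on $g$. The variant I will prove runs Gallagher's argument and counts pairs $(g,g')$ of exceptional elements with $g\equiv g'\pmod p$ for $p\in \mathcal P_g\cap\mathcal P_{g'}$. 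Each such pair contributes $\sum \log p\le \log|g-g'|\ll\log N$. On the other hand, Cauchy--Schwarz over residue classes gives a lower bound involving $\sum_p 1/\nu(p)$, but only using a lower bound for $|\mathcal P_g|$ that is uniform in $g$. The aim is a bound of the shape $|\mathcal A|\ll (\log N)^{B-1}$ after choosing $y$. Heuristically, $\sum_{p\le x}\log p/\nu(p)$ behaves like $\sum_p \log p\cdot q_p/p$, where $q_p$ is the largest prime factor of $p-1$.

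\textbf{Step 3 (optimisation; main obstacle).} The crux is to evaluate $\sum_{p\le x} q_p\log p/p$, or more precisely its restriction to $p\in\mathcal P_g$, with enough precision. I expect the constant $e^{-1/3}$ to come from here. By Brun--Titchmarsh, primes $p\le x$ with $q_p>x^{1/3}$ (more generally, with $p-1$ having a large prime factor) occupy a proportion controlled by a Dickman/Brun--Titchmarsh-type integral. The threshold $B>2+2e^{-1/3}$ should then appear when we balance the GRH requirement $x\gg(\log N)^{2+\epsilon}$ from Step 1 against the requirement from Step 2 that the denominator $\sum_p\log p/\nu(p)-\log N$ be positive and of size $\gg x/\log N$. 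This gives $|\mathcal A|\ll x/\log x\cdot(\log N)/x$ times lower-order factors, i.e.\ $O((\log N)^{B-1})$. If Brun--Titchmarsh alone turns out to be too lossy, I will instead use a lower bound for primes $p\le x$ with $p-1=2q\cdot m$, where $q$ is prime and $m$ is small.
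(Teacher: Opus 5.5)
There is a genuine gap. With only two ranges of $q$, your scheme proves the statement only for $B>4$.

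In Step 1, since $\log|g|\asymp\log N=x^{1/B}$, the GRH--Chebotarev error is about $x^{1/2}\log|g|\approx x^{1/2+1/B}$ for \emph{each} field. Even after a Hooley-type truncation you still have to treat every prime $q\le y$ individually, so the errors add up to about $\pi(y)\,x^{1/2+1/B}$. This is $o(x/\log x)$ only if $y\le x^{1/2-1/B-o(1)}$. Your remark that the error is acceptable ``as soon as $B>2$'' holds only when few fields are involved.

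In Step 2, $\nu(p)$ can be as large as about $x/y$. So the larger-sieve denominator is positive, and the resulting bound (roughly $x/y$) is $O((\log N)^{B-1})$, only if $y$ is somewhat larger than $\log N=x^{1/B}$. The paper takes $y=x^{\alpha}$ with $\alpha>1/B$.

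Both requirements can hold only if $1/B<\tfrac12-\tfrac1B$, i.e.\ $B>4$. So the balancing you describe in Step 3 produces the exponent $4$. The constant $e^{-1/3}$ does not come from primes with $q_p>x^{1/3}$ or from a Dickman-type integral. A smaller point: the size of the union of the index-$q$ subgroups of $(\mathbb{Z}/p\mathbb{Z})^{\times}$ over $q\mid p-1$, $q>y$, is governed by the \emph{smallest} such $q$, not the largest prime factor of $p-1$. Your heuristic therefore overestimates $\sum_p\log p/\nu(p)$.

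The missing idea is a third, intermediate range $x^{1/2-1/B}(\log x)^{-2}<q\le x^{\alpha}$, with $1/B<\alpha<\tfrac12$, in which one simply discards every prime $p\equiv 1\pmod q$. This is affordable only if the main term is explicit and uniform in $g$.
\begin{itemize}
\item That is why the paper works with $\mathcal{S}_g=\{p\le x:(g/p)=-1,\ (p-1,W)=1\}$, where $W=\prod_{2<p\le\log x}p$. Such primes pass every $q$-test with $q\le\log x$ automatically.
\item One has $\#\mathcal{S}_g\sim\tfrac12\mathscr{A}_1(g)\mathscr{A}_0(x)\mathrm{li}(x)$, with $\mathscr{A}_1(g)\ge\mathscr{A}_1(-15)=\tfrac23$ for every $g\in\mathscr{G}$.
\item The condition $(p-1,W)=1$ does not involve $g$. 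So the discarded primes can be counted with GRH for $\pi(x;qd,1)$, $d\mid W$, whose error $x^{1/2}\log x$ carries no factor $\log|g|$ and is admissible for $q\le x^{\alpha}$.
\item Mertens then bounds the loss by $\log\bigl(\alpha/(\tfrac12-\tfrac1B)\bigr)\mathscr{A}_0(x)\mathrm{li}(x)$.
\item The common factor $\mathscr{A}_0(x)$ cancels. Positivity needs $\log\bigl(\alpha/(\tfrac12-\tfrac1B)\bigr)<\tfrac13$ together with $\alpha>1/B$, which is exactly $B>2+2e^{-1/3}$.
\end{itemize}

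In particular, your claim that entanglement corrections and perfect powers ``only affect the constants'' is not harmless here. The threshold is decided precisely by a lower bound for the main-term constant that is uniform in $g$ and shares the factor $\mathscr{A}_0(x)$ with the discarded primes.

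Your Step 2, a larger sieve over $g$-dependent sets $\mathcal{P}_g$ of uniformly positive density, is essentially the paper's positive-proportion variant. The paper uses the fixed set $\mathcal{T}=\{p\in(\sqrt{x},x]:(p-1,W)=1\}$ and the bound $\nu(p)<Bx^{1-\alpha}$. So your Step 2 survives once it is restricted to $q>x^{\alpha}$.

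Finally, take $x=(\log N)^B$ rather than $(\log 2N)^B$, since $p\le(\log 2N)^B$ does not give $p\le(\log|g|)^B$ when $|g|<2N$.
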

Of course, Theorem~\ref{conthm} is at its strongest when $B$ is taken as close to $2+2e^{-1/3}$ as possible. So to simplify matters, we recast our result into the following neater, yet less precise, corollary.
\begin{corollary}
    Let $N$ be sufficiently large. Then, assuming GRH, all $g\in\G$ with $|g|\in[N,2N]$ satisfy
    \begin{equation}\label{almostpgeq2}
        p_g\leq (\log |g|)^{3.44}
    \end{equation}
    with at most $O\big((\log N)^{2.44}\big)$ exceptions.
\end{corollary}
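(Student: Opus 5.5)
This corollary follows from Theorem~\ref{conthm} by a direct specialisation, so the plan is simply to choose $B$ and check that both the bound~\eqref{almostpgeq} and the size of the exceptional set specialise to~\eqref{almostpgeq2}.

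\textbf{Step 1: admissibility of $B=3.44$.} We need $B>2+2e^{-1/3}$. Since $e^{1/3}>1.3956$, we have $e^{-1/3}<0.7166$, and hence
\[
2+2e^{-1/3}<2+1.4332=3.4332<3.44 .
\]
So $B=3.44$ is an admissible fixed choice in Theorem~\ref{conthm}.

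\textbf{Step 2: specialisation.} Apply Theorem~\ref{conthm} with $B=3.44$. For $N$ sufficiently large (depending on this fixed $B$), all $g\in\G$ with $|g|\in[N,2N]$ satisfy
\[
p_g\le(\log|g|)^{3.44},
\]
which is~\eqref{almostpgeq2}. The exceptions number at most
\[
O\big((\log N)^{B-1}\big)=O\big((\log N)^{2.44}\big),
\]
as claimed.

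The only point needing care is that the implied constant in the exceptional-set bound, and the threshold for ``$N$ sufficiently large'', may depend on $B$. Since $B=3.44$ is fixed once and for all, this dependence is harmless. There is no genuine obstacle here; all the substantive work (GRH input and the larger-sieve variant) lies in the proof of Theorem~\ref{conthm} itself.
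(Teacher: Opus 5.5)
Your proposal is correct and matches the paper exactly: the corollary is just Theorem~\ref{conthm} with the admissible fixed choice $B=3.44>2+2e^{-1/3}=3.433\ldots$, and the exceptional set is then $O((\log N)^{B-1})=O((\log N)^{2.44})$. Your numerical check that $e^{-1/3}<0.7166$ is right, and so is your remark that the implied constants depend only on the fixed $B$.
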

As with Fan and Pollack's uniform result~\eqref{eq19}, small improvements to~\eqref{almostpgeq2} are possible if we restrict $g$. Namely, if we restrict to positive $g$, we obtain a bound of $(\log|g|)^{3.35}$ with an $O((\log N)^{2.35})$ exceptional set. Then, if we restrict to square-free $g$, we obtain a bound of $(\log |g|)^{3.22}$ with an $O((\log N)^{2.22})$ exceptional set. For more information on these calculations, see the remark at the end of Section~\ref{consect}.

The key new input to obtain Theorem~\ref{conthm} is a novel variant of Gallagher's larger sieve (Proposition~\ref{newlargersieve}). The original larger sieve of Gallagher~\cite{gallagher1971} estimates the number of integers in an interval after a large number of residue classes are removed. By contrast, our variant is more flexible, by also allowing one to include integers that belong to a \emph{positive proportion} of the remaining residue classes. This variant of the larger sieve is well adapted to the structure of Fan and Pollack's argument, and we believe that it may have further applications. 

\subsection{Unconditional results}
One can also ask whether an unconditional variant of Theorem~\ref{conthm} is achievable, given that we are not looking for a completely uniform bound on $p_g$. This problem has in fact been studied before, but requires a different approach and a much larger exceptional set.

We begin by recalling some past results from the literature. Note that the following unconditional results are stated for arbitrary intervals $[M,M+N]$ of consecutive integers, rather than the dyadic interval we used in Theorem~\ref{conthm}. Firstly, Vaughan~\cite{vaughan1973} showed that
\begin{equation}\label{vaughaneq}
    p_g\leq \sqrt{N}
\end{equation}
for all but $O\big(\sqrt{N}(\log N)^{1-\lambda}\big)$ integers $g$ in an interval $[M,M+N]$, where
\begin{equation*}
    \lambda=\sum_{h=1}^\infty\prod_p\left(1-\frac{p^h-(p-1)^h}{p^{h+1}-p^h}\right)=0.629\ldots\,.
\end{equation*}
Vaughan's result sharpens an earlier estimate of Gallagher~\cite{gallagher1967}, who obtained the same bound~\eqref{vaughaneq} but with an exceptional set of size $O(\sqrt{N}\log N)$.

Fan and Pollack then showed that if one enlarges the exceptional set further, a much better bound for $p_g$ is possible. In particular, they showed~\cite[Proposition~6.1]{Fan2025}, for any fixed $K\geq 2$
\begin{equation*}
    p_g\leq (\log N)^K,
\end{equation*}
for all but $N^{\frac{1}{2}\left(1+\frac{1}{K}\right)+o(1)}$ integers $g$ in $[M,M+N]$. So for example, with at most $N^{3/4+o(1)}$ exceptions, $p_g\leq (\log N)^2$ for each integer $g\in[M,M+N]$.

We contribute further along this line of investigation, by considering how small $p_g$ can be, if all that we desire is an exceptional set of size $o(N)$. Ultimately, we are able to obtain the following unconditional result.

\begin{theorem}\label{unconthm}
     Let $\Phi(x)$ be any function satisfying $\Phi(x)\to\infty$ as $x\to\infty$. Then, for all $g\in[M,M+N]$,
     \begin{equation*}
         p_g\leq \Phi(N)
     \end{equation*}
     with at most $o(N)$ exceptions.
\end{theorem}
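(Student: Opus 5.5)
The plan is to reduce the problem to an equidistribution statement modulo a fixed (or very slowly growing) primorial, combined with the Chinese Remainder Theorem. Whether $g$ is a primitive root modulo $p$ depends only on $g \bmod p$, and exactly $\varphi(p-1)$ of the $p$ residue classes modulo $p$ are primitive roots. Let $y\ge 3$ be a parameter, to be chosen in terms of $N$, and set $Q=\prod_{p\le y}p$. Every exceptional $g$ (that is, one with $p_g>y$) lies in a residue class $a \bmod Q$ such that, for every $p\le y$, $a \bmod p$ is \emph{not} a primitive root modulo $p$. By the Chinese Remainder Theorem, the number of such classes $a \bmod Q$ is $\prod_{p\le y}\bigl(p-\varphi(p-1)\bigr)$. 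Each residue class modulo $Q$ contains at most $N/Q+1$ integers of $[M,M+N]$. Hence
\begin{equation*}
\#\{g\in[M,M+N]: p_g>y\}\le \Bigl(\frac{N}{Q}+1\Bigr)\prod_{p\le y}\bigl(p-\varphi(p-1)\bigr)\le N\prod_{p\le y}\Bigl(1-\frac{\varphi(p-1)}{p}\Bigr)+Q .
\end{equation*}

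The key step is to show that the product $\prod_{p\le y}(1-\varphi(p-1)/p)$ tends to $0$ as $y\to\infty$. Equivalently, I would show that $\sum_p \varphi(p-1)/p$ diverges. An elementary route suffices here. The classical bound $\varphi(n)\gg n/\log\log n$ gives $\varphi(p-1)/p\gg 1/\log\log p$, and then
\begin{equation*}
\sum_{p\le y}\frac{1}{\log\log p}\ \ge\ \frac{\pi(y)}{\log\log y}\ \longrightarrow\ \infty .
\end{equation*}
Since $1-x\le e^{-x}$, the product is at most $\exp\bigl(-\sum_{p\le y}\varphi(p-1)/p\bigr)$, which tends to $0$. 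One could alternatively invoke the known result that $\varphi(p-1)/(p-1)$ has mean value equal to Artin's constant over primes, but that is unnecessary.

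Finally, I would choose $y=y(N)$ tending to infinity slowly enough that $y(N)\le\Phi(N)$ and $Q=e^{(1+o(1))y}\le\sqrt N$. For instance, take $y(N)=\min\{\Phi(N),\tfrac13\log N\}$ for large $N$; if $\Phi$ is not monotone one can replace it by a smaller function that is, and this only strengthens the conclusion. Then every $g$ with $p_g>\Phi(N)$ satisfies $p_g>y(N)$. The number of exceptions is therefore at most
\begin{equation*}
N\prod_{p\le y(N)}\Bigl(1-\frac{\varphi(p-1)}{p}\Bigr)+\sqrt N=o(N),
\end{equation*}
which proves Theorem~\ref{unconthm}.

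I do not expect a serious obstacle. The only point needing care is the divergence in the second paragraph, together with balancing $y$ so that the error term $Q$ stays $o(N)$ while $y\to\infty$.
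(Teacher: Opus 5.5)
Your proof is correct and follows essentially the same route as the paper: the Chinese remainder theorem modulo the primorial $Q=\prod_{p\le y}p$, the bound $N\prod_{p\le y}(1-\varphi(p-1)/p)+Q$, and a truncation of $y$ so that $Q=o(N)$. Your choice $y=\min\{\Phi(N),\tfrac13\log N\}$ plays the role of the paper's reduction to $\Phi(x)=o(\log x)$. The only other difference is that you show the product tends to $0$ by an elementary divergence argument, rather than citing the Fan--Pollack asymptotic $\exp\bigl(-(c+o(1))y/\log y\bigr)$. That suffices for $o(N)$, but it does not recover the precise exceptional-set rate stated in the paper's remark.
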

\begin{remark}
    Taking more care in the proof of Theorem~\ref{unconthm}, one obtains a precise estimate for the exceptional set. Namely, if $\Phi(x)\leq (1-\epsilon)\log x$ for a fixed $\epsilon>0$ and sufficiently large $x$, then the size of the exceptional set is bounded above by
     \begin{equation*}
         N\exp\left(-(c+o(1))\frac{\Phi(N)}{\log \Phi(N)}\right)=o(N)
     \end{equation*}
     for some absolute constant $c>0$.
\end{remark}

At first glance, Theorem~\ref{unconthm} appears to be a very strong result. However, its proof is elementary, using basic facts about primitive roots combined with the Chinese remainder theorem. Nevertheless, Theorems~\ref{conthm} and~\ref{unconthm} both highlight a general principle: \emph{for almost all $g\in\G$, one can quickly find a prime $p$ such that $g$ is a primitive root modulo $p$}.

\subsection{Outline of paper and notation}
The structure of the rest of the paper is as follows. In Section~\ref{largersect}, we introduce Gallagher's larger sieve and then prove our ``positive proportion" variant. In Section~\ref{consect}, we prove Theorem~\ref{conthm} by modifying Fan and Pollack's argument, and applying our larger sieve. Finally, we give a short proof of Theorem~\ref{unconthm} in Section~\ref{unconsect}.

Throughout we use Vinogradov's notation $f\ll g$ to mean $f=O(g)$, and the notation $f\asymp g$ to mean that both $f\ll g$ and $g\ll f$. Further, we write $\ord_p(g)$ for the multiplicative order of $g$ modulo $p$, $\Lambda(n)$ for the von-Mangoldt function, $\mu(n)$ for the M\"obius function, $\pi(x)$ for the prime-counting function,
\begin{equation*}
    \li(x):=\int_0^x\frac{1}{\log t}\mathrm{d}t\sim\frac{x}{\log x}
\end{equation*}
for the logarithmic integral, and $(a,b)$ as a shorthand for $\gcd(a,b)$.

\section{A positive proportion variant of the larger sieve}\label{largersect}
In this section we prove a new variant of Gallagher's larger sieve (Proposition~\ref{newlargersieve}), which will be an essential ingredient in the proof of Theorem~\ref{conthm}. We begin by recalling the standard version of the larger sieve, which estimates the size of a set of integers after a large number of residue classes have been removed.
\begin{proposition}[{Gallagher's larger sieve \cite[Theorem~1]{gallagher1971}}]\label{largerprop}
    Let $N\in\mathbb{N}$, and $Q$ be a finite set of prime powers. Suppose that all but $\nu(q)$ residue classes modulo $q$ are removed for each $q\in Q$. Then, among a set of $N$ consecutive integers, the number remaining unsieved does not exceed
    \begin{equation}\label{orginallargebound}
        \frac{\sum_{q\in Q}\Lambda(q)-\log N}{\sum_{q\in Q}\frac{\Lambda(q)}{\nu(q)}-\log N}
    \end{equation}
    provided the denominator is positive.
\end{proposition}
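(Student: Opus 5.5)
Let $\mathcal{S}$ be the set of unsieved integers in the block $\{M+1,\dots,M+N\}$ of $N$ consecutive integers, and put $S=|\mathcal{S}|$. We may assume $S\ge 1$. The proof of Proposition~\ref{largerprop} comes from evaluating one sum in two ways:
\begin{equation*}
    \Sigma:=\sum_{\substack{m,n\in\mathcal{S}\\ m\neq n}}\log|m-n|.
\end{equation*}

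\textbf{Upper bound for $\Sigma$.} Every difference satisfies $1\le|m-n|<N$, so trivially $\Sigma\le S(S-1)\log N$. This is the only place the length $N$ of the interval enters.

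\textbf{Lower bound for $\Sigma$.} We write $\log|m-n|=\sum_{q\mid (m-n)}\Lambda(q)$, where the sum runs over prime powers $q$. Dropping the prime powers not in $Q$ (all terms are nonnegative) and swapping the order of summation gives
\begin{equation*}
    \Sigma\ \ge\ \sum_{q\in Q}\Lambda(q)\sum_{\substack{m\neq n\in\mathcal{S}\\ m\equiv n\ (\mathrm{mod}\ q)}}1
    =\sum_{q\in Q}\Lambda(q)\Big(\sum_{a\bmod q}Z(q,a)^2-S\Big),
\end{equation*}
where $Z(q,a)$ counts the elements of $\mathcal{S}$ in the class $a \bmod q$. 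Each element of $\mathcal{S}$ lies in one of the $\nu(q)$ surviving classes. So by Cauchy--Schwarz, $\sum_a Z(q,a)^2\ge S^2/\nu(q)$. Hence
\begin{equation*}
    \Sigma\ \ge\ S^2\sum_{q\in Q}\frac{\Lambda(q)}{\nu(q)}-S\sum_{q\in Q}\Lambda(q).
\end{equation*}

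\textbf{Combining.} Comparing the two bounds and dividing by $S$ gives
\begin{equation*}
    S\Big(\sum_{q\in Q}\frac{\Lambda(q)}{\nu(q)}-\log N\Big)\le \sum_{q\in Q}\Lambda(q)-\log N.
\end{equation*}
When the bracketed quantity is positive, dividing by it yields \eqref{orginallargebound}.

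No step here is a genuine obstacle; the only points needing care are the following.
\begin{itemize}
\item The diagonal terms $m=n$ must be excluded, so that no $\log 0$ appears and the $-S$ correction is accounted for correctly.
\item The bound $\Sigma\le S(S-1)\log N$ should be kept exactly, rather than weakened to $S^2\log N$. It is this $-S\log N$ term that produces the $-\log N$ in the numerator.
\end{itemize}
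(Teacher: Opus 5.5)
Your argument is correct and is exactly Gallagher's original proof, which the paper cites rather than reproves; it has the same skeleton the paper uses for Proposition~\ref{newlargersieve}: Cauchy--Schwarz over the $\nu(q)$ surviving classes, splitting $\sum_r Z(r,q)^2$ into diagonal and off-diagonal pairs, and bounding $\sum_{q\in Q,\, q\mid a-b}\Lambda(q)\le\log|a-b|\le\log N$. The only difference is that the paper's variant first passes through $Z_\Omega(q)$ and applies Cauchy--Schwarz once more to handle the proportion $c$, which is why its $c=1$ case gives a weaker bound than yours.
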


Although a rather general statement, Gallagher's larger sieve is particularly useful in the study of primitive roots, or more broadly the multiplicative order of elements modulo $p$. Notably, Gallagher~\cite[Theorem~2]{gallagher1971} used his larger sieve to show that the number of $g\leq N$ with $\ord_p(g)\leq N^{\theta}$ for all primes $p\leq N^{\theta+\epsilon}$ is $\ll N^{\theta}$, uniformly for $0\leq\theta\leq 1$. In a closer connection to our result, Fan and Pollack~\cite[Theorem~7.1]{Fan2025} used Gallagher's larger sieve to study the least prime $p$ such that a given $g\in \G$ is an\footnote{Here, Fan and Pollack define an \emph{almost primitive root} $g$ modulo $p$ to be such that $g$ generates a subgroup of $(\Z/p\Z)^{\times}$ of index at most 2.} ``almost-primitive root" modulo $p$. 

Now, in the language of Proposition~\ref{largerprop}, we let $\Omega_q$ denote the set of unsieved residue classes, so that $\nu(q)=\#\Omega_q$. Then, Gallagher's larger sieve is bounding the size of the set $\mcA\subseteq[M,M+N]$, where each $a\in\mcA$ satisfies

\begin{equation}\label{oldcon}
    \sum_{\substack{q\in Q \\ a\thinspace(\mathrm{mod}\thinspace q) \in\Omega_q}}\!\Lambda(q)~=~\sum_{q\in Q}\Lambda(q).
\end{equation}
That is, each $a$ belongs to the set $\Omega_q$ for every $q\in Q$. However, for our purposes we require a slightly more flexible ``positive proportion" condition, whereby each $a\in\mcA$ satisfies  
\begin{equation}\label{newcon}
    \sum_{q\in Q \atop a \thinspace(\mathrm{mod}\thinspace q) \in\Omega_q}\Lambda(q)~\geq~ c\sum_{q\in Q}\Lambda(q)
\end{equation}
for a fixed choice of $c\in(0,1]$. Fortunately, Gallagher's proof of the larger sieve is short and elementary, so it is not too arduous to incorporate the condition~\eqref{newcon} into the proof. In particular, with the condition~\eqref{newcon}, we use an extra application of Cauchy--Schwarz in Gallagher's proof, yielding the following result.  

\begin{proposition}\label{newlargersieve}
Let $M\in\mathbb{Z}$, $N\in\N$ and $Q$ be a finite set of prime powers. For each $q\in Q$, let $\Omega_q$ be a nonempty set of residue classes modulo $q$ and $\nu(q)=\#\Omega_q$. Let $c \in (0, 1]$ and $\mathcal{A}$ be the set of integers $a \in [M, M+N]$ satisfying 
\begin{equation}\label{largercon}
    \sum_{q\in Q \atop a \thinspace (\mathrm{mod}\thinspace q) \in\Omega_q}\Lambda(q)~\geq~ c\sum_{q\in Q}\Lambda(q).
\end{equation}
Then,
\begin{equation}\label{mainlargeeq}
    \#\mathcal{A} \leq \left(\sum_{q\in Q}\Lambda(q)-\log N\right)/\left(\frac{c^2\big(\sum_{q\in  Q} \Lambda(q)\big)^2}{\sum_{q\in  Q} \nu(q)\Lambda(q)}-\log N\right),
\end{equation}
provided the denominator is positive.
\end{proposition}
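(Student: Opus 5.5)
We follow Gallagher's double-counting argument. Write $Z(q,h)=\#\{a\in\mcA: a\equiv h \ (\mathrm{mod}\ q)\}$, $L=\sum_{q\in Q}\Lambda(q)$ and $A=\#\mcA$, and evaluate
\begin{equation*}
    S:=\sum_{q\in Q}\Lambda(q)\sum_{h\in\Omega_q}Z(q,h)^2=\sum_{q\in Q}\Lambda(q)\sum_{h\in\Omega_q}\sum_{\substack{a,b\in\mcA\\ a\equiv b\equiv h\ (q)}}1
\end{equation*}
in two ways.

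\textbf{Upper bound.} Dropping the restriction $h\in\Omega_q$, we get $S\le\sum_{a,b\in\mcA}\sum_{q\in Q,\ q\mid a-b}\Lambda(q)$. The diagonal $a=b$ contributes $AL$. For $a\neq b$, the sum of $\Lambda(q)$ over prime powers $q$ dividing $a-b$ is at most $\log|a-b|$. Using $\prod_{a\ne b}|a-b|\le\prod_{a}\bigl(\text{prod. of distances}\bigr)$, the standard estimate gives $\sum_{a\neq b}\log|a-b|\le A(A-1)\log N$; Gallagher's argument uses exactly this bound. Hence
\begin{equation*}
    S\le AL+A(A-1)\log N.
\end{equation*}

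\textbf{Lower bound: the new ingredient.} Let $A_q=\sum_{h\in\Omega_q}Z(q,h)$ be the number of $a\in\mcA$ lying in $\Omega_q$. Cauchy--Schwarz over $h\in\Omega_q$ gives $\sum_{h\in\Omega_q}Z(q,h)^2\ge A_q^2/\nu(q)$, so
\begin{equation*}
    S\ge\sum_{q\in Q}\frac{\Lambda(q)A_q^2}{\nu(q)}.
\end{equation*}
Summing condition~\eqref{largercon} over $a\in\mcA$ gives
\begin{equation*}
    \sum_q\Lambda(q)A_q=\sum_{a\in\mcA}\sum_{q:\,a\in\Omega_q}\Lambda(q)\ge cAL.
\end{equation*}
A second Cauchy--Schwarz, splitting $\Lambda(q)A_q=\bigl(\sqrt{\Lambda(q)/\nu(q)}\,A_q\bigr)\bigl(\sqrt{\Lambda(q)\nu(q)}\bigr)$, yields
\begin{equation*}
    (cAL)^2\le\Bigl(\sum_q\frac{\Lambda(q)A_q^2}{\nu(q)}\Bigr)\Bigl(\sum_q\nu(q)\Lambda(q)\Bigr).
\end{equation*}
Therefore $S\ge c^2A^2L^2/\sum_q\nu(q)\Lambda(q)$.

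\textbf{Conclusion.} Combining the two bounds and dividing by $A$ (the case $A=0$ is trivial) gives
\begin{equation*}
    A\Bigl(\frac{c^2L^2}{\sum\nu\Lambda}-\log N\Bigr)\le L-\log N.
\end{equation*}
When the denominator is positive this is exactly~\eqref{mainlargeeq}.

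The main point to check is the upper-bound step, namely $\sum_{a\ne b}\log|a-b|\le A(A-1)\log N$ for $a,b\in[M,M+N]$. This holds because $|a-b|\le N$ for all such $a,b$. The interval $[M,M+N]$ contains $N+1$ integers, but the differences are still at most $N$, so the bound is unaffected.
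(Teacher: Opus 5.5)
Your proof is correct and is essentially the paper's argument. It uses the same first Cauchy--Schwarz over $h\in\Omega_q$, the same weighted Cauchy--Schwarz over $q\in Q$ fed by summing the condition~\eqref{largercon} over $a\in\mcA$, and the same Gallagher upper bound obtained by dropping the restriction $h\in\Omega_q$ and using $\sum_{q\in Q,\,q\mid a-b}\Lambda(q)\le\log|a-b|\le\log N$. The aside about products of distances is unnecessary, since, as you note at the end, only the trivial bound $|a-b|\le N$ is used, so you can simply delete it.
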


Before proving Proposition~\ref{newlargersieve} we make a couple of small remarks. Firstly, we note that the bound for $c=1$ in~\eqref{mainlargeeq} is slightly weaker than the bound~\eqref{orginallargebound} given in Gallagher's larger sieve. This is due to the extra application of Cauchy--Schwarz needed in our proof. Secondly, we note that there are several other variants of the larger sieve appearing in the literature, see e.g.~\cite{croot2004} and~\cite{shao2016}. However, we are not aware of any existing variants with the positive proportion condition~\eqref{largercon}. 

\begin{proof}[Proof of Proposition~\ref{newlargersieve}]
    Throughout the proof, we set, for any integer $r$ (mod $q$),
    \begin{equation*}
        Z(r,q):=\sum_{\substack{a\in\mcA\\a\equiv r\thinspace(\mathrm{mod}\thinspace q)}}\!1\quad\text{and}\quad Z_{\Omega}(q):=\sum_{r\in\Omega_q}Z(r,q).
    \end{equation*}
    We begin by noting that
    \begin{align*}
        \sum_{q\in Q}Z_{\Omega}(q)\Lambda(q)=\sum_{q\in Q}\sum_{\substack{a\in\mcA\\a\thinspace(\mathrm{mod}\thinspace q)\in\Omega_q}}\!\Lambda(q)&=\sum_{a\in\mcA}\sum_{\substack{q\in Q\\ a\thinspace(\mathrm{mod}\thinspace q)\in\Omega_q}}\Lambda(q)\\
        &\geq c\sum_{a\in\mcA}\sum_{q\in Q}\Lambda(q)\\
        &=c\:\# \mcA\sum_{q\in Q}\Lambda(q),
    \end{align*}
    where we have made use of the condition~\eqref{largercon}. Hence, by Cauchy--Schwarz,
    \begin{align}
        c^2(\#\mcA)^2\left(\sum_{q\in Q}\Lambda(q)\right)^2&\leq\left(\sum_{q\in Q}Z_{\Omega}(q)\Lambda(q)\right)^2\notag\\
        &\leq\left(\sum_{q\in Q}\frac{Z_{\Omega}(q)^2}{\nu(q)}\Lambda(q)\right)\left(\sum_{q\in Q}\nu(q)\Lambda(q)\right).\label{cauchy1eq}
    \end{align}
    Notably, the first factor in~\eqref{cauchy1eq} can be bounded from above by following the original proof of Gallagher~\cite{gallagher1971}. In particular, again applying Cauchy--Schwarz,
    \begin{equation*}
        Z_{\Omega}(q)^2\leq \nu(q)\sum_{r\in\Omega_q}Z(r,q)^2\leq\nu(q)\sum_{r\thinspace{(\mathrm{mod}\thinspace q)}}Z(r,q)^2.
    \end{equation*}
    Then, expanding the square and separating into diagonal and off-diagonal terms,
    \begin{equation*}
        \frac{Z_{\Omega}(q)^2}{\nu(q)}\leq\sum_{r\thinspace{(\mathrm{mod}\thinspace q)}}Z(r,q)^2=\sum_{\substack{a,b\in\mcA\\a\equiv b\thinspace(\mathrm{mod}\thinspace q)}}1\leq \#\mcA+\sum_{\substack{a,b\in \mcA\\ a\neq b\\ q\mid a-b}} 1.
    \end{equation*}
    Finally, multiplying by $\Lambda(q)$ then summing over $q\in Q$,
    \begin{align}\label{upperzbound}
        \sum_{q\in Q}\frac{Z_{\Omega}(q)^2}{\nu(q)}\Lambda(q)&\leq \#\mcA\sum_{q\in Q}\Lambda(q)+\sum_{\substack{a,b\in \mcA\\ a\neq b}}\sum_{\substack{q\in Q\\ q\mid a-b}}\Lambda(q)\notag\\
        &\leq\#\mcA\sum_{q\in Q}\Lambda(q)+\left((\#\mcA)^2-\#\mcA\right)\log N
    \end{align}
    where in the last step we have used the identity $\sum_{q\mid n}\Lambda(q)=\log n$. 
    Combining~\eqref{cauchy1eq} and~\eqref{upperzbound} we thus have
    \begin{equation*}
        \frac{c^2(\#\mcA)^2\left(\sum_{q\in Q}\Lambda(q)\right)^2}{\sum_{q\in Q}\nu(q)\Lambda(q)}\leq \#\mcA\sum_{q\in Q}\Lambda(q)+\left((\#\mcA)^2-\#\mcA\right)\log N,
    \end{equation*}
    which rearranges to the desired result.
\end{proof}

\section{Proof of Theorem~\ref{conthm}}\label{consect}
In this section we prove Theorem~\ref{conthm} by incorporating our variant of the larger sieve (Proposition~\ref{newlargersieve}) into the approaches of Hooley~\cite{hooley1967}, and Fan and Pollack~\cite{Fan2025}.

\subsection{Setting up the argument}\label{setupsubsect}
We begin by defining the \emph{$\ell$-test}, which is used to structure the overall argument.

\begin{definition}[$\ell$-test]\label{elltestdef}
    Let $g$ be an integer and $p$ be a prime not dividing $g$. For a prime number $\ell$, we say that $p$ fails the $\ell$-test (with respect to $g$) if
    \begin{equation*}
        p\equiv 1\pmod{\ell}\quad\text{and}\quad g^{(p-1)/\ell}\equiv 1\pmod{p};
    \end{equation*}
    otherwise, we say $p$ passes the $\ell$-test.
\end{definition}
So, by definition, $g$ is a primitive root modulo $p$ precisely when $p$ passes the $\ell$-test for all primes $\ell$. Now in what follows, we let $N$ be a sufficiently large integer and
\begin{equation*}
    x:=(\log N)^B
\end{equation*}
for a fixed\footnote{In Theorem~\ref{conthm}, the case $B\geq 4$ is trivially covered by the case of smaller $B$.} $B\in (2+2e^{-1/3},4)$. Following Fan and Pollack~\cite[\S~4]{Fan2025}, we also define
\begin{equation*}
    W:=\prod_{2<p\leq \log x}p,
\end{equation*}
and
\begin{equation}\label{Sgdef}
    \mcS_g=\mcS_g(x):=\{p\leq x:(g/p)=-1\ \text{and}\ \gcd(p-1,W)=1\}.
\end{equation}
The approach is then to show for almost all $g\in\G$ with  $|g|\in [N,2N]$ (with a small exceptional set) that there exists at least one $p\in\mcS_g$ which passes the $\ell$-test for all primes $\ell$. With this in mind, we note that it suffices to consider $\ell>\log x$ owing to the conditions $(g/p)=-1$ and $\gcd(p-1,W)=1$ in the definition~\eqref{Sgdef} of $\mcS_g$. We now divide the interval $(\log x, \infty)$ into three subintervals as follows:
\begin{equation*}
    I_1 = (\log x, y_1 ], \phantom{m} I_2 = ( y_1, x^{\alpha}] \phantom{m} \text{and}\phantom{m} I_3 = (x^{\alpha}, \infty),
\end{equation*}
where 
\begin{equation*}
    y_1=\frac{x^{\frac{1}{2}-\frac{1}{B}}}{(\log x)^2}
\end{equation*}
and $\alpha$ is a positive real number satisfying
\begin{equation}\label{alphabounds}
    \frac{1}{B}<\alpha<\left(\frac{1}{2}-\frac{1}{B}\right)e^{1/3}.
\end{equation}
Note that such an $\alpha$ must exist by the restriction that $B>2+2e^{-1/3}$. Next, for $j \in \{ 1, 2, 3\}$ and $g\in\G$, let
\begin{equation}\label{Sgjdef}
    \mcS_{g,j}:=\big\{p\in \mcS_g:p\ \text{fails the}\ \ell\text{-test for some}\ \ell\in I_j  \big\},
\end{equation}
where each $\ell$-test is with respect to $g$. So, with this notation
\begin{equation}\label{mastereq}
    \#\{p\in\mcS_g:g\ \text{is a primitive root mod}\ p\}\geq\#\mcS_g-\sum_{j=1}^3\#\mcS_{g,j}.
\end{equation}
Therefore, to prove Theorem~\ref{conthm}, we must show that the right-hand side of~\eqref{mastereq} is positive for all $g\in\G$ with $|g|\in[N,2N]$, with at most $O((\log N)^{B-1})$ exceptions.

\subsection{Bounding each term}
With the setup outlined in the previous section, we now bound the size of the sets $\mcS_g$ and $\mcS_{g,j}$ defined in~\eqref{Sgdef} and~\eqref{Sgjdef} respectively. For $\mcS_g$, $\mcS_{g,1}$ and $\mcS_{g,2}$, this is done uniformly for all $g\in\G$, following~\cite[\S~4]{Fan2025}. Then, when bounding $\#\mcS_{g,3}$ we apply our larger sieve (Proposition~\ref{newlargersieve}) and remove a small exceptional set of $g\in \G$.

To begin with, we state the known asymptotic formula for $\#\mcS_g$, proven using inclusion-exclusion and GRH estimates for character sums and primes in arithmetic progressions.

\begin{lemma}\label{sg0lem}
    Assuming GRH, one has, for all $g\in\G$,
    \begin{equation}\label{Sgasymp}
        \#\mcS_g=\frac{{\msA}_0(x){\msA}_1(g)}{2}\li(x)+O\left(x^{\frac{1}{2}+\frac{1}{B}+o(1)}\right)
    \end{equation}
    where
    \begin{equation}\label{A0def}
        {\msA_0}(x):=\prod_{2<p\leq \log x}\left(1-\frac{1}{p-1}\right)=\left(1+O\left(\frac{1}{\log\log x}\right)\right)\frac{2C_2e^{-\gamma}}{\log\log x},
    \end{equation}
    with $C_2=0.6601\ldots$ the twin-prime constant, and
    \begin{equation}\label{A1def}
        \msA_1(g):=1-(-1)^{\omega(g_1)}1_{4\mid g_1-1,\:g_1\mid W}\prod_{p\mid g_1}\frac{1}{p-2},
    \end{equation}
    with $g_1$ is the square-free integer such that $g=g_1m^2$ for some $m\in\mathbb{N}$.
\end{lemma}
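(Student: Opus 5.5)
We count $\#\mcS_g$ by detecting the two conditions in~\eqref{Sgdef} with indicator functions. For $p\nmid 2g$ we have $1_{(g/p)=-1}=\frac12\big(1-(g/p)\big)$, and by M\"obius inversion $1_{\gcd(p-1,W)=1}=\sum_{d\mid (p-1,W)}\mu(d)$. The primes $p\mid 2g$ number $O(\log|g|)=O(\log N)$, which is negligible. Interchanging sums gives
\begin{equation*}
    \#\mcS_g=\frac12\sum_{d\mid W}\mu(d)\Big(\pi(x;d,1)-\sum_{\substack{p\leq x\\ p\equiv 1\ (\mathrm{mod}\ d)}}\Big(\frac{g}{p}\Big)\Big)+O(\log N).
\end{equation*}
Since $W=e^{(1+o(1))\log x}=x^{1+o(1)}$, the number of divisors is $\tau(W)=2^{\pi(\log x)}=x^{o(1)}$. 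For each squarefree $d\mid W$, GRH gives $\pi(x;d,1)=\li(x)/\varphi(d)+O(x^{1/2}\log(dx))$. Summing, the main term is $\li(x)\sum_{d\mid W}\mu(d)/\varphi(d)=\li(x)\prod_{2<p\leq\log x}(1-\frac{1}{p-1})=\msA_0(x)\li(x)$, with error $x^{1/2+o(1)}$.

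For the character sum, write $(g/p)=(g_1/p)$ for $p\nmid m$; the primes dividing $m$ again contribute $O(\log N)$. By quadratic reciprocity, $p\mapsto (g_1/p)$ agrees with a primitive real character $\chi_{g_1}$ of conductor $|D|$, where $D$ is $g_1$ or $4g_1$ according as $g_1\equiv 1\pmod 4$ or not. Since $g\in\G$, the character $\chi_{g_1}$ is nontrivial. We then expand the condition $p\equiv 1\pmod d$ using Dirichlet characters mod $d$, or equivalently view it as a Chebotarev condition in $\mathbb{Q}(\zeta_d,\sqrt{g_1})$. Under GRH, $\sum_{p\leq x,\,p\equiv1\ (d)}\chi_{g_1}(p)$ equals a main term $\li(x)/\varphi(d)$ times the indicator that $\chi_{g_1}\psi$ is principal for some $\psi$ mod $d$, plus an error $O(x^{1/2}\log(dN x))$. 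This occurs exactly when $\chi_{g_1}$ is induced from a character of conductor dividing $d$. Since $d$ is odd, this requires $g_1\equiv 1\pmod 4$ and $|g_1|\mid d$, which forces $g_1\mid W$. In that case the sum is $\li(x)/\varphi(d)$ (sign checked via $(g_1/p)=1$ when $p\equiv1\ (g_1)$), and otherwise it is just the error term.

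Collecting the main terms, for $g_1\equiv1\ (4)$ with $g_1\mid W$ we get
\begin{equation*}
    \li(x)\sum_{g_1\mid d\mid W}\frac{\mu(d)}{\varphi(d)}=\li(x)\,\frac{\mu(|g_1|)}{\varphi(|g_1|)}\prod_{\substack{2<p\leq\log x\\ p\nmid g_1}}\Big(1-\frac{1}{p-1}\Big)=\msA_0(x)\li(x)(-1)^{\omega(g_1)}\prod_{p\mid g_1}\frac{1}{p-2},
\end{equation*}
using $\frac{1}{p-1}\big/\big(1-\frac{1}{p-1}\big)=\frac{1}{p-2}$. Therefore $\#\mcS_g=\frac12\msA_0(x)\msA_1(g)\li(x)+E$. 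The error satisfies $E\ll\tau(W)\,x^{1/2}\log(Nx)=x^{1/2+o(1)}\log N$. Since $\log N=x^{1/B}$, this is $x^{1/2+1/B+o(1)}$, which matches~\eqref{Sgasymp}. The logarithmic factor coming from the discriminant of $\mathbb{Q}(\zeta_d,\sqrt{g_1})$ is exactly where the $x^{1/B}$ enters.

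Finally, the asymptotic in~\eqref{A0def} follows from Mertens' theorem. Write $1-\frac{1}{p-1}=(1-\frac1p)^2\cdot\frac{p(p-2)}{(p-1)^2}$ for $p>2$. The product of $\frac{p(p-2)}{(p-1)^2}$ converges to $C_2$, with tail $O(1/\log x)$. Also $\prod_{2<p\le y}(1-\frac1p)^2=4e^{-2\gamma}/\log^2 y\cdot(1+O(1/\log y))$, taken at $y=\log x$.

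The main obstacle is the uniform GRH estimate for the twisted sum $\sum_{p\equiv 1(d)}(g_1/p)$. It needs careful handling of the conductor of $\chi_{g_1}\psi$, in particular that this character is principal only when $|g_1|\mid d$ and $g_1\equiv1\pmod 4$. The error must also be uniform in both $d\mid W$ and $|g|\le 2N$, with only a $\log N$ loss.
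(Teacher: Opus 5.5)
Your count of $\#\mcS_g$ is correct and follows the argument the paper defers to (Fan--Pollack). That argument uses M\"obius inversion over $d\mid W$ and GRH for the conditions $p\equiv 1\pmod d$ and $(g/p)=-1$; your character-sum version is equivalent to their Chebotarev count in $\mathbb{Q}(\zeta_d,\sqrt{g})$. A secondary main term appears exactly when $\sqrt{g_1}\in\mathbb{Q}(\zeta_d)$, i.e.\ $g_1\equiv 1\pmod 4$ and $g_1\mid d$. The error per $d$ is $x^{1/2+o(1)}\log(|\Delta| d x)$, and $\log|\Delta|\ll\log N=x^{1/B}$ is precisely the correction the paper makes to Fan--Pollack. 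Your implicit restriction $|g|\le 2N$ is what makes this error uniform, and it is the intended reading of ``for all $g\in\G$'' here.

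The one step that fails as written is your verification of the asymptotic for $\msA_0(x)$. The identity $1-\frac{1}{p-1}=(1-\frac1p)^2\frac{p(p-2)}{(p-1)^2}$ is false. Its right side equals $\frac{p-2}{p}=1-\frac2p$, the twin-prime local factor, not $\frac{p-2}{p-1}$; at $p=3$ it gives $\frac13$ instead of $\frac12$. Multiplying your pieces would give $\msA_0(x)\sim 4C_2e^{-2\gamma}/(\log\log x)^2$, which contradicts the claimed order $1/\log\log x$. The correct factorisation has a single power:
\begin{equation*}
1-\frac{1}{p-1}=\Big(1-\frac1p\Big)\frac{p(p-2)}{(p-1)^2}.
\end{equation*}
Mertens gives $\prod_{2<p\le y}(1-\frac1p)=\frac{2e^{-\gamma}}{\log y}\bigl(1+O(\frac{1}{\log y})\bigr)$ at $y=\log x$. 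Combined with $\prod_{2<p\le y}\frac{p(p-2)}{(p-1)^2}=C_2\bigl(1+O(\frac1y)\bigr)$, this yields exactly $\frac{2C_2e^{-\gamma}}{\log\log x}\bigl(1+O(\frac{1}{\log\log x})\bigr)$. With that fix your proof is complete.
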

\begin{proof}
    See~\cite[pp.~13--15]{Fan2025}. Note that we have corrected a minor typo in~\cite{Fan2025} whereby one needs a factor of $\log(\Delta dx)$ rather than $\log(dx)$ in the error term introduced at bottom of page 14 of~\cite{Fan2025}. This introduces the factor of $x^{1/B}$ in the error term of~\eqref{Sgasymp}. However, since we (as well as Fan and Pollack) take $B>2$, this correction does not affect the asymptotic for $\#\mcS_g$.
\end{proof}

Next, we recall the known upper bound for $\#\mcS_{g,1}$, which is asymptotically small compared to $\#\mcS_g$. This bound is proven using the Chebotarev density theorem (conditional on GRH) and Brun's sieve.

\begin{lemma}\label{sg1lem}
    Assuming GRH, one has, for all $g\in\G$,
    \begin{equation*}
        \#\mcS_{g,1}\ll\frac{\li(x)}{(\log\log x)^2}=o(\#\mcS_g).
    \end{equation*}
\end{lemma}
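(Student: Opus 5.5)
We bound $\#\mcS_{g,1}$ by a union bound over $\ell\in I_1=(\log x,y_1]$: each $p\in\mcS_{g,1}$ fails the $\ell$-test for some such $\ell$, so
\begin{equation*}
    \#\mcS_{g,1}\leq\sum_{\log x<\ell\leq y_1}\#\{p\leq x:\ p\equiv 1\ (\mathrm{mod}\ \ell),\ g^{(p-1)/\ell}\equiv 1\ (\mathrm{mod}\ p),\ (p-1,W)=1\}.
\end{equation*}
The first two conditions say exactly that $p$ splits completely in the Kummer field $K_\ell=\mathbb{Q}(\zeta_\ell,g^{1/\ell})$. Since $g\in\G$ and $\ell$ is odd, $[K_\ell:\mathbb{Q}]=\ell(\ell-1)$: $g$ is not an $\ell$-th power because $|g|\leq 2N$ while $\ell>\log x\gg\log\log N$ would force $g=\pm m^\ell$. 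The exceptional cases $g=\pm m^{\ell}$ can only occur for $O(1)$ primes $\ell$, namely $\ell\mid$ the perfect-power exponent of $|g|$; these $\ell$ satisfy $\ell\leq\log_2(2N)$ and are handled by the trivial bound $\pi(x;\ell,1)$ and Brun--Titchmarsh, contributing $\ll x/(\ell\log x)$ each. They are negligible, since there are $O(1)$ of them, each with $\ell>\log x$.

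The key point is to keep the sieve condition $(p-1,W)=1$, which is where the saving $(\log\log x)^{-2}$ comes from. Two sieve factors are needed. One comes from the density $\msA_0(x)\asymp 1/\log\log x$ of $p$ with $(p-1,W)=1$. The other comes from the fact that, for a fixed $\ell$, the splitting condition together with the sieve is effectively a count of primes $p=1+\ell m$ with $m$ also free of small prime factors. The main step is therefore the following. For each $\ell\in I_1$ we run Brun's sieve (an upper-bound sieve with level $D=x^{\delta}$, $\delta$ small) over primes $r\mid W$, sifting out $p\equiv1\ (\mathrm{mod}\ r)$ among the primes splitting completely in $K_\ell$. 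The resulting local counts are primes splitting completely in $K_\ell(\zeta_d)$, $d\mid W$, $d\leq D$. Since $\ell>\log x$, $\ell\nmid d$ and these fields have degree $\ell(\ell-1)\varphi(d)$ up to a bounded factor (the quadratic subfield issue involving $\sqrt{g}$ and $\zeta_d$ only affects constants). GRH-Chebotarev gives each count as $\li(x)/(\ell(\ell-1)\varphi(d))+O(x^{1/2}\log(\ell dNx))$. Summing with the sieve weights, the main term is
\begin{equation*}
    \ll \frac{\li(x)}{\ell(\ell-1)}\prod_{2<r\leq\log x}\Big(1-\frac{1}{r-1}\Big)\cdot\frac{\ell-1}{\varphi(\ell)}\cdot(\text{second factor}).
\end{equation*}
Here I expect to realise the second factor of $1/\log\log x$ by sifting the cofactor $(p-1)/\ell$ simultaneously. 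If that is too delicate, the fallback is to sift only $p-1$, which gives $\li(x)\msA_0(x)/\ell^2$. Summing over $\ell>\log x$ gives $\ll \li(x)/(\log x\,\log\log x)$, which is already far better than $\li(x)/(\log\log x)^2$. So in fact one sieve factor suffices, because $\sum_{\ell>\log x}1/\ell^2\ll 1/\log x$.

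For the error terms, we sum over $\ell\leq y_1$ and $d\leq D$ to get $\ll y_1 D x^{1/2}\log(Nx)\ll x^{1-1/B+\delta}(\log x)^{-2}\cdot\log N$. Since $\log N=x^{1/B}$, this is $x^{1+\delta}(\log x)^{-2}$, so $\delta$ must be handled carefully. The choice $y_1=x^{1/2-1/B}/(\log x)^2$ is precisely the largest for which the unsieved error $\sum_{\ell\leq y_1}x^{1/2}\log(\ell N x)\ll y_1x^{1/2+1/B}\log x$ equals $\li(x)/\log x$. The sieve level therefore has to be taken as $D=\exp(\sqrt{\log x})$ or so, since $W$ only involves primes up to $\log x$ and Brun's sieve with level $x^{o(1)}$ suffices to capture the product $\msA_0(x)$ up to a constant.

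This balancing of the Chebotarev error against the sieve level is the main obstacle. With $D=x^{o(1)}$ it costs only a factor $x^{o(1)}$, which is absorbed because we have room: the total error is $\ll x^{1+o(1)}/(\log x)^{2}$ versus a needed $o(\li(x)/(\log\log x)^2)$. If this is tight, I would shrink the level or drop the sieve entirely for large $\ell$, using $\sum_{\ell}\li(x)/\ell^2\ll \li(x)/\log x$ directly. Finally, Lemma~\ref{sg0lem} with $\msA_1(g)\gg1$ gives $\#\mcS_g\asymp\li(x)/\log\log x$, which yields the $o(\#\mcS_g)$ claim.
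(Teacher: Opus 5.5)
Your argument has a genuine gap at the point that carries the content of the lemma: the primes $\ell\in I_1$ for which $g$ is an $\ell$-th power. You assert that there are only $O(1)$ of them and bound each by Brun--Titchmarsh, but the $O(1)$ claim is false. Write $|g|=m^k$ with $k$ maximal. These $\ell$ are the prime factors of $k$ exceeding $\log x$, and $k$ may be as large as $\log_2(2N)\asymp x^{1/B}$, which for $B<4$ even exceeds $y_1$. The only general bound on their number is $\log k/\log\log x\ll \log x/\log\log x$, and this is essentially attained. Let $k$ be the product of the primes in $(\log x,(1+\frac{1}{2B})\log x]$, so $k=x^{1/(2B)+o(1)}$. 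Take $g=m^k\in[N,2N]$ with $m$ not a square, so $g\in\G$. Then for each $\ell\mid k$, every $p\equiv 1\pmod{\ell}$ with $p\nmid m$ fails the $\ell$-test, and $\sum_{\ell\mid k}1/\ell\asymp 1/\log\log x$. Your bound $\ll x/(\ell\log x)$ per $\ell$ therefore only yields $\#\mcS_{g,1}\ll\li(x)/\log\log x$. This is the same order as $\#\mcS_g$, so it proves neither the lemma nor the $o(\#\mcS_g)$ statement needed for Theorem~\ref{conthm}.

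The repair is to keep the condition $(p-1,W)=1$ precisely for these $\ell$. Since $\ell\nmid W$, GRH for primes in progressions with M\"obius inversion over $d\mid W$ (as in the proof of Lemma~\ref{sg2lem}), or Brun's sieve, gives $\#\{p\le x:\ p\equiv1\pmod{\ell},\ (p-1,W)=1\}\ll\msA_0(x)\li(x)/\ell$. Moreover, $\sum_{\ell\mid k,\,\ell>\log x}1/\ell\le\log k/(\log x\log\log x)\ll1/\log\log x$. The product of these two factors is exactly the $(\log\log x)^{-2}$ in the lemma; the second factor does not come from sifting $(p-1)/\ell$, as you suggest.

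For the remaining $\ell$, where $[K_\ell:\mathbb{Q}]=\ell(\ell-1)$, no sieve is needed. Unsieved GRH--Chebotarev has error $O(x^{1/2}\log(\ell|g|x))=O(x^{1/2+1/B})$, which summed over primes $\ell\le y_1$ gives $O(x/(\log x)^3)$. The main terms give $O(\li(x)/(\log x\log\log x))$, as in your fallback. You should drop the sieved version of that step. Its error $\pi(y_1)Dx^{1/2+1/B}\asymp Dx/(\log x)^3$ is not absorbed for $D=\exp(\sqrt{\log x})$, so the room you claim is not there.
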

\begin{proof}
    See~\cite[p.~16]{Fan2025}.
\end{proof}

Computing an upper bound for $\#\mcS_{g,2}$ is also a standard procedure (cf.~\cite[pp.~16--17]{Fan2025}). However, because the exact interval $I_2$ does not appear in the literature, we give a full proof below for completeness.

\begin{lemma}\label{sg2lem}
    Assuming GRH, one has, for every $g\in\G$,
    \begin{equation*}
        \#\mcS_{g,2}~\leq~\log\left(\frac{\alpha}{\frac{1}{2}-\frac{1}{B}}\right)\msA_0(x)\li(x)+O\left(\frac{\li(x)}{\log x}\right)
    \end{equation*}
    with $\msA_0(x)$ as in~\eqref{A0def}.
\end{lemma}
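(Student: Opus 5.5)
We bound $\#\mcS_{g,2}$ by a union bound over $\ell\in I_2=(y_1,x^\alpha]$. Any $p$ counted in $\mcS_{g,2}$ lies in $\mcS_g$ and satisfies $p\equiv1\pmod{\ell}$ for some prime $\ell\in I_2$. In particular, $p\le x$ and $p\equiv 1\pmod \ell$. We also keep the sieve condition $\gcd(p-1,W)=1$, since the factor $\msA_0(x)$ in the stated bound comes from it. We discard the conditions $(g/p)=-1$ and $g^{(p-1)/\ell}\equiv1\pmod p$: failing the $\ell$-test is hard to exploit for $\ell$ this large with GRH error terms, and we will not need it. Thus
\begin{equation*}
    \#\mcS_{g,2}\le\sum_{\ell\in I_2}\#\{p\le x:\ p\equiv1\ (\mathrm{mod}\ \ell),\ \gcd(p-1,W)=1\}.
\end{equation*}

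For each $\ell$ we count the inner primes by inclusion--exclusion over $d\mid W$, using GRH for primes in progressions modulo $d\ell$:
\begin{equation*}
    \sum_{d\mid W}\mu(d)\,\pi(x;d\ell,1)=\sum_{d\mid W}\mu(d)\left(\frac{\li(x)}{\varphi(d)(\ell-1)}+O(\sqrt{x}\log x)\right).
\end{equation*}
Here $(d,\ell)=1$ because $\ell>\log x$. The main terms sum to $\msA_0(x)\li(x)/(\ell-1)$, since $\sum_{d\mid W}\mu(d)/\varphi(d)=\prod_{2<p\le\log x}(1-1/(p-1))$.

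The error terms are the main obstacle. Since $W=e^{(1+o(1))\log x}$, the number of divisors of $W$ is $2^{\pi(\log x)}=x^{o(1)}$. Summed over $\ell\le x^\alpha$, the error is $O(x^{1/2+\alpha+o(1)})$. This is $o(\li(x)/\log x)$ only when $\alpha<1/2$. The upper bound in~\eqref{alphabounds} allows $\alpha$ up to $(\frac12-\frac1B)e^{1/3}$, which is less than $1/2$ for $B<4$: indeed, for $B\le 4$ it is at most $\frac14e^{1/3}\approx0.35$. So the GRH route suffices with a power saving.

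To be safe when $\ell$ is near $x^\alpha$, there are two fallbacks. One can replace the inclusion--exclusion by Brun--Titchmarsh with a sieve upper bound (Selberg's sieve) for $p-1$ coprime to $W$. Alternatively, one can truncate the sieve to $d\mid W$ with $d\le x^{\epsilon}$ via the fundamental lemma, at the cost of a factor $1+O(1/\log x)$. I expect the GRH route with the $2^{\pi(\log x)}$ divisor count to suffice.

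Finally, by Mertens' theorem for primes,
\begin{equation*}
    \sum_{y_1<\ell\le x^\alpha}\frac{1}{\ell-1}=\log\frac{\alpha\log x}{\log y_1}+O\!\left(\frac1{\log x}\right).
\end{equation*}
Here $\log y_1=(\frac12-\frac1B)\log x-2\log\log x$, so the ratio equals $\alpha/(\frac12-\frac1B)+O(\log\log x/\log x)$. This gives the main term $\log\big(\alpha/(\frac12-\frac1B)\big)\msA_0(x)\li(x)$. The error from this expansion is $O(\msA_0(x)\li(x)\log\log x/\log x)$. Because $\msA_0(x)\asymp 1/\log\log x$ by~\eqref{A0def}, this is $O(\li(x)/\log x)$, as claimed.
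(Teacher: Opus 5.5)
Your proposal is correct and follows the paper's proof essentially step for step. You use a union bound over $\ell\in I_2$ keeping only $p\equiv 1\pmod{\ell}$ and $(p-1,W)=1$, then inclusion--exclusion over $d\mid W$ with the GRH estimate for $\pi(x;d\ell,1)$, an error of $O(2^{\pi(\log x)}x^{1/2+\alpha}\log x)=O(x^{1/2+\alpha+o(1)})$, and Mertens' theorem for $\sum_{y_1<\ell\le x^{\alpha}}1/(\ell-1)$, exactly as the paper does.

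Your explicit checks fill in details the paper leaves implicit. These are that $\alpha<\frac14e^{1/3}<\frac12$ (using $B<4$), and that the $O(\log\log x/\log x)$ loss from the factor $(\log x)^{2}$ in $y_1$ is absorbed because $\msA_0(x)\asymp 1/\log\log x$. The fallbacks are unnecessary, and Brun--Titchmarsh would in any case lose a constant factor, so it could not give the sharp constant $\log\bigl(\alpha/(\frac12-\frac1B)\bigr)$.
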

\begin{proof}
    We first note that
    \begin{equation*}
        \#\mcS_{g,2}\leq\#\{p\leq x:(p-1,W)=1\ \text{and}\ p\equiv 1\ (\text{mod}\ \ell)\ \text{for some}\ \ell\in I_2\}.
    \end{equation*}
    Now, for a \emph{fixed} $\ell\in I_2$,
    \begin{equation*}
        \sum_{\substack{p\leq x\\ p\equiv 1\thinspace(\text{mod}\thinspace\ell)\\(p-1,W)=1}}\! 1=\sum_{\substack{p\leq x\\p\equiv 1\thinspace(\text{mod}\thinspace\ell)}}\sum_{\substack{d\mid W\\ d\mid p-1}}\mu(d)=\sum_{d\mid W}\mu(d)\pi(x;\ell d,1).
    \end{equation*}
    From here, we apply the GRH estimate for primes in arithmetic progressions (see e.g.~\cite[Corollary 13.8]{montyvaughan2007}):
    \begin{equation*}
        \pi(x;\ell d,1)=\frac{\li(x)}{\varphi(\ell d)}+O(x^{1/2}\log x).
    \end{equation*}
    That is,
    \begin{equation*}
        \sum_{\substack{p\leq x\\ p\equiv 1\thinspace(\text{mod}\thinspace\ell)\\(p-1,W)=1}}\!1=\sum_{d\mid W}\mu(d)\left(\frac{\li(x)}{\varphi(\ell d)}+O(x^{1/2}\log x)\right)=\frac{\msA_0(x)\li (x)}{\varphi(\ell)}+O(2^{\pi(\log x)}x^{1/2}\log x).
    \end{equation*}
    We now sum over all $\ell\in I_2$, giving
    \begin{align*}
        \#\mcS_{g,2}&\leq\sum_{\ell\in I_2}\sum_{\substack{p\leq x\\ p\equiv 1\thinspace(\text{mod}\thinspace\ell)\\(p-1,W)=1}}1=\msA_0(x)\li(x)\sum_{y_1<\ell\leq x^{\alpha}}\frac{1}{\ell-1}+O(\pi(x^{\alpha})x^{1/2+o(1)}).
    \end{align*}
    From here, Mertens' theorem and the prime number theorem yield
    \begin{align*}
        \#\mcS_{g,2}&\leq\msA_{0}(x)\li(x)\left(\log\left(\frac{\log x^{\alpha}}{\log y_1}\right)+O\left(\frac{1}{\log y_1}\right)\right)+O\left(x^{1/2+\alpha+o(1)}\right)\\
        &=\log\left(\frac{\alpha}{\frac{1}{2}-\frac{1}{B}}\right)\msA_0(x)\li(x)+O\left(\frac{\li(x)}{\log x}\right),
    \end{align*}
    as required.
\end{proof}

We now move onto the final estimate, that of $\#\mcS_{g,3}$. It is here that we deviate substantially from Fan and Pollack~\cite[\S 4]{Fan2025} by applying Proposition~\ref{newlargersieve} instead of trying to obtain a uniform result.

\begin{lemma}\label{sg3lem}
    Assume GRH and let $\epsilon\in(0,1)$ be fixed. Then, every $g\in\G$ with $|g|\in[N,2N]$ satisfies
    \begin{equation*}
        \#\mcS_{g,3}\leq \epsilon\msA_0(x)\li (x)
    \end{equation*}
    with at most $O((\log N)^{B-1})$ exceptions.
\end{lemma}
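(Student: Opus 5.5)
The idea is to convert the exceptional condition into a ``positive proportion'' sieve condition. We then apply Proposition~\ref{newlargersieve} with $Q$ the primes up to $x$, treating the intervals $g\in[N,2N]$ and $g\in[-2N,-N]$ separately (i.e.\ $M=N$ and $M=-2N$, with $N$ consecutive integers).

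\textbf{Choice of $\Omega_p$.} Suppose $p\in\mcS_{g,3}$. Then $p\le x$, $p\nmid g$ (since $(g/p)=-1$), and there is a prime $\ell>x^{\alpha}$ with $\ell\mid p-1$ and $g^{(p-1)/\ell}\equiv1\pmod p$. So $g \bmod p$ lies in the subgroup of $(\Z/p\Z)^\times$ of order $(p-1)/\ell$. This motivates defining, for each prime $p\le x$,
\begin{equation*}
\Omega_p:=\bigcup_{\substack{\ell\mid p-1\\ \ell>x^{\alpha}\ \text{prime}}}\{r\in(\Z/p\Z)^\times: r^{(p-1)/\ell}=1\},
\end{equation*}
taking $\Omega_p=\{1\}$ when the union is empty so that it is nonempty. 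This set does not depend on $g$. Since $p-1\le x$, there are at most $1/\alpha$ primes $\ell>x^{\alpha}$ dividing $p-1$, and each contributes at most $(p-1)/\ell<x^{1-\alpha}$ residues. Hence
\begin{equation*}
\nu(p)\ll_\alpha x^{1-\alpha},\qquad \sum_{p\le x}\nu(p)\log p\ll x^{2-\alpha}.
\end{equation*}

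\textbf{The positive proportion condition.} Call $g$ exceptional if $\#\mcS_{g,3}>\epsilon\msA_0(x)\li(x)$. For such $g$ we have $g\bmod p\in\Omega_p$ for every $p\in\mcS_{g,3}$. Using $\log p\ge\frac12\log x$ for all but $O(\sqrt x)$ primes $p\le x$, together with $\msA_0(x)\asymp1/\log\log x$ from~\eqref{A0def}, we get
\begin{equation*}
\sum_{\substack{p\le x\\ g \bmod p\in\Omega_p}}\log p\;\ge\;\tfrac12\log x\,\big(\epsilon\msA_0(x)\li(x)-O(\sqrt x)\big)\;\ge\; c\sum_{p\le x}\log p,
\end{equation*}
with $c\asymp\epsilon/\log\log x$. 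Here $\sum_{p\le x}\log p\sim x$ by the prime number theorem. Thus every exceptional $g$ in each dyadic range lies in the set $\mcA$ of Proposition~\ref{newlargersieve}.

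\textbf{Applying the sieve.} The denominator in~\eqref{mainlargeeq} is
\begin{equation*}
\frac{c^2x^2(1+o(1))}{O(x^{2-\alpha})}-\log N\;\gg\;\frac{x^{\alpha}}{(\log\log x)^2}-x^{1/B}.
\end{equation*}
This is positive, and in fact $\gg x^{\alpha}(\log\log x)^{-2}$, precisely because $\alpha>1/B$ by~\eqref{alphabounds} and $\log N=x^{1/B}$. Therefore
\begin{equation*}
\#\mcA\ll x^{1-\alpha}(\log\log x)^2\le x^{1-1/B}=(\log N)^{B-1}
\end{equation*}
for $N$ large. Combining the two sign ranges gives the lemma.

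\textbf{Main obstacle.} The delicate point is making the ratio in~\eqref{mainlargeeq} come out right. If $\Omega_p$ is taken naively, for instance as all residues of order at most $x^{1-\alpha}$, then $\nu(p)$ can be as large as about $x^{2-2\alpha}$, and the denominator then requires $\alpha>(1+1/B)/2$, which is incompatible with~\eqref{alphabounds}. Exploiting that only $O(1)$ primes $\ell>x^\alpha$ can divide $p-1$ is what brings $\nu(p)$ down to $x^{1-\alpha}$. One must also check that the loss from $c\asymp\epsilon/\log\log x$, which enters squared, is only a $(\log\log x)^{O(1)}$ factor and is absorbed by the strict inequality $\alpha>1/B$.
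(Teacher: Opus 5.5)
Your proof is correct and follows the paper's argument. It uses the same $\Omega_p$ (the union of the index-$\ell$ subgroups over primes $\ell\mid p-1$ with $\ell>x^{\alpha}$), the same bound $\nu(p)\ll x^{1-\alpha}$ (fewer than $1/\alpha$ such $\ell$), and Proposition~\ref{newlargersieve} applied separately on $[N,2N]$ and $[-2N,-N]$. The only difference is that the paper sieves over $\mcT=\{p\in(\sqrt{x},x]:(p-1,W)=1\}$, so that $c=\epsilon/2$ is constant and the bound $x^{1-\alpha}$ comes out directly. You instead sieve over all $p\le x$ with $c\asymp\epsilon/\log\log x$, and absorb the resulting $(\log\log x)^2$ loss using the strict inequality $\alpha>1/B$. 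Your choice $\Omega_p=\{1\}$ when the union is empty also neatly handles the nonemptiness hypothesis, which the paper passes over.
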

\begin{proof}
    Let $g\in\G$ with $|g|\in[N,2N]$. We begin by defining
    \begin{equation*}
        \mcT:=\{p\in(\sqrt{x},x]:(p-1,W)=1\}
    \end{equation*}
    and
    \begin{equation}\label{Qgdef}
        \mcT_{g,3}:=\big\{p\in \mcT: p\nmid g\ \text{and}\ p\ \text{fails the}\ \ell\text{-test for some}\ \ell\in I_3  \big\}
    \end{equation}
    as easier sets to work with than $\mcS_g$ and $\mcS_{g,3}$ (notably $\mcT$ is independent of $g$). Here, we note that
    \begin{equation*}
        \#\mcS_{g,3}\leq \#\mcT_{g,3}+\sqrt{x}
    \end{equation*}
    and, arguing as in the proof of Lemma~\ref{sg2lem},
    \begin{equation*}
       \#\mcT=\sum_{\substack{\sqrt{x}<p\leq x\\(p-1,W)=1}}\!1=\msA_0(x)\li(x)+O(x^{1/2+o(1)})\asymp\frac{x}{\log x\log\log x}.
    \end{equation*} 
    Hence, in what follows, it suffices to show that $\#\mcT_{g,3}<\epsilon\#\mcT$ for all but $O((\log N)^{B-1})$ values of $g\in\G$.
    
    Now, recall that if $p\nmid g$,
    \begin{equation*}
        p\ \text{fails the}\ \ell\text{-test}\ \iff\ \ell\mid p-1\ \text{and}\ g^{(p-1)/\ell}\equiv 1\ \text{(mod}\ p).
    \end{equation*}
    So, with a view to apply Proposition~\ref{newlargersieve}, let $H_{p,\ell}\subseteq(\Z/p\Z)^{\times}$ be the unique subgroup of index $\ell$ and set
    \begin{equation*}
        \Omega_p=\Omega_p(x):=\bigcup_{\substack{\ell\mid p-1\\ \ell>x^{\alpha}}}H_{p,\ell}
    \end{equation*}
    meaning that
    \begin{equation*}
        p\in\mcT_{g,3}\ \iff\ p\in\mcT\ \text{and}\ g\ \text{(mod}\ p)\in\Omega_p.
    \end{equation*}
    Here, we note that, for $p\leq x$,
    \begin{equation}\label{vpbound}
        \nu(p):=\#\Omega_p\leq\sum_{\substack{\ell\mid p-1\\ \ell>x^{\alpha}}}|H_{p,\ell}|=\sum_{\substack{\ell\mid p-1\\\ell>x^{\alpha}}}\frac{p-1}{\ell}<x^{1-\alpha}\sum_{\substack{\ell\mid p-1\\\ell>x^{\alpha}}}1<\frac{x^{1-\alpha}}{\alpha}<Bx^{1-\alpha}.
    \end{equation}
    Next, we define
    \begin{equation*}
        \mcE:=\big\{g\in\G:|g|\in[N,2N]\ \text{and}\ \#\{p\in\mcT:g\thinspace(\text{mod}\thinspace p)\in\Omega_p\}\geq \epsilon\#\mcT\big\}
    \end{equation*}
    so that it now suffices to show $\#\mcE\ll (\log N)^{B-1}$.

    Here, we note that if $\#\{p\in\mcT:g\thinspace(\text{mod}\thinspace p)\in\Omega_p\}\geq \epsilon\#\mcT$, then
    \begin{equation*}
        \sum_{\substack{p\in\mcT\\ g\thinspace(\text{mod}\thinspace p)\in\Omega_p}}\!\Lambda(p)>\frac{\log x}{2}\sum_{\substack{p\in\mcT\\ g\thinspace(\text{mod}\thinspace p)\in\Omega_p}}\!1\geq\frac{\epsilon}{2}\sum_{p\in\mcT}\Lambda(p),
    \end{equation*}
    where we have used that all $p\in\mcT$ satisfy $p\in(\sqrt{x},x]$ and thus ${(\log x)/2<\Lambda(p)\leq \log x}$.

    So, in the notation of Proposition~\ref{newlargersieve}, we set $c=\epsilon/2$ and $\mcA_{+}$ (resp.~ $\mcA_{-}$) to be the set of integers $g\in[N,2N]$ (resp. $g\in[-2N,-N]$) satisfying
    \begin{equation*}
        \sum_{\substack{p\in\mcT\\ g\thinspace(\text{mod}\thinspace p)\in\Omega_p}}\!\Lambda(p)\geq c\sum_{p\in\mcT}\Lambda(p).
    \end{equation*}
    Applying Proposition~\ref{newlargersieve} then yields
    \begin{equation}\label{Ebound}
        \#\mcE\leq (\#\mcA_{+})+(\#\mcA_{-})\leq 2\left(\sum_{p\in\mcT}\Lambda(p)-\log N\right)/\left(\frac{c^2\big(\sum_{p\in\mcT} \Lambda(p)\big)^2}{\sum_{p\in\mcT} \nu(p)\Lambda(p)}-\log N\right).
    \end{equation}
    To bound~\eqref{Ebound} further, we note that
    \begin{equation}\label{Tuppereq}
        \sum_{p\in\mcT}\Lambda(p)\leq \#\mcT\log x\asymp\frac{x}{\log\log x}
    \end{equation}
    and
    \begin{equation}\label{Tlowereq}
        \sum_{p\in\mcT}\Lambda(p)\geq \frac{\#\mcT}{2}\log x\asymp\frac{x}{\log\log x}.
    \end{equation}
    Substituting~\eqref{vpbound},~\eqref{Tuppereq} and~\eqref{Tlowereq} into~\eqref{Ebound}, and using that $\alpha>1/B$ then gives
    \begin{equation*}
        \#\mcE\ll_{B,\epsilon}\frac{x/\log\log x}{x^{\alpha}/\log\log x}=x^{1-\alpha}=(\log N)^{B(1-\alpha)}<(\log N)^{B-1},
    \end{equation*}
    as desired.
\end{proof}

\subsection{Completing the proof}
Combining all of the lemmas from the previous section, we now complete the proof of Theorem~\ref{conthm}.

\begin{proof}[Proof of Theorem~\ref{conthm}]
    Let $N$ be sufficiently large. From~\eqref{mastereq}, we need to show
    \begin{equation*}
        \#\mcS_g-\sum_{j=1}^3\#\mcS_{g,j}>0
    \end{equation*}
    for all $g\in\G$ with $|g|\in[N,2N]$ with at most $O((\log N)^{B-1})$ exceptions. Combining Lemmas~\ref{sg0lem}--\ref{sg3lem}, this is true provided
    \begin{equation}\label{finaleq}
        \frac{{\msA}_1(g)}{2}-\log\left(\frac{\alpha}{\frac{1}{2}-\frac{1}{B}}\right)-\epsilon>0,
    \end{equation}
    with $\msA_{1}(g)$ as defined in~\eqref{A1def} and $0<\epsilon\leq 1$ fixed but arbitrary. Notably, directly from the definition~\eqref{A1def}, we see that
    \begin{equation*}
        \msA_1(g)\geq\msA_{1}(-15)=\frac{2}{3}.
    \end{equation*}
    Hence,~\eqref{finaleq} is satisfied provided
    \begin{equation*}
        \frac{1}{3}-\log\left(\frac{\alpha}{\frac{1}{2}-\frac{1}{B}}\right)-\epsilon>0.
    \end{equation*}
    This always holds true for some choice of $\epsilon>0$ since we chose $\alpha$ to be such that $\alpha<\left(\frac{1}{2}-\frac{1}{B}\right)e^{1/3}$ (see~\eqref{alphabounds}).
\end{proof}

\begin{remark}
    As alluded to in the introduction, it is possible to slightly improve Theorem~\ref{conthm} if one adds additional restrictions on $g$. For example:
    \begin{enumerate}
        \item If we restrict to positive $g\in\G$ then we can take ${B>2+2e^{-2/5}}$$=3.340\ldots$, since $\msA_1(g)\geq \msA_1(21)=4/5$ for $g>1$.
        \item If we restrict to square-free $g\in\G$, then we can take ${B>2+2e^{-1/2}=3.213\ldots}\,$. This is because, in this case, $g=g_1$ and thus $\msA_1(g)=1+o(1)$ as $N\to\infty$. In fact, the same result holds when $g=g_1 m^2$ with $m$ satisfying $m^2 = o(|g|)$, or when $g_1 \not\equiv 1\thinspace(\text{mod}\thinspace 4)$.
    \end{enumerate}
\end{remark}

\section{Proof of Theorem~\ref{unconthm}}\label{unconsect}
The general idea behind the proof of Theorem~\ref{unconthm} is to utilise the following simple fact about primitive roots.
\begin{lemma}\label{simplelem}
    For any prime $p$, there are $\varphi(p-1)$ primitive roots modulo $p$.
\end{lemma}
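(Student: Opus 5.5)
The statement to prove is Lemma~\ref{simplelem}: for any prime $p$ there are exactly $\varphi(p-1)$ primitive roots modulo $p$, counted as residue classes in $(\Z/p\Z)^{\times}$. The plan is to reduce this to the structure of finite cyclic groups. It has two steps: first show that $(\Z/p\Z)^{\times}$ is cyclic of order $p-1$, then count the generators of a cyclic group of order $n$.

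For cyclicity, I would use that $\Z/p\Z$ is a field, so for each $d\mid p-1$ the polynomial $X^d-1$ has at most $d$ roots. Let $\psi(d)$ be the number of elements of order exactly $d$. If $\psi(d)>0$, pick an element $a$ of order $d$. Its powers $1,a,\dots,a^{d-1}$ are $d$ distinct roots of $X^d-1$, so they are all of its roots. Every element of order $d$ is therefore a power $a^k$, and $a^k$ has order $d$ exactly when $(k,d)=1$. This gives $\psi(d)\in\{0,\varphi(d)\}$. Since every element's order divides $p-1$,
\begin{equation*}
    \sum_{d\mid p-1}\psi(d)=p-1=\sum_{d\mid p-1}\varphi(d),
\end{equation*}
where the second equality is Gauss's identity. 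The termwise inequality $\psi(d)\leq\varphi(d)$ then forces $\psi(d)=\varphi(d)$ for every $d\mid p-1$.

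Taking $d=p-1$ gives $\psi(p-1)=\varphi(p-1)$. The elements of order $p-1$ are precisely the primitive roots, so this is the claim. Equivalently, once a generator $h$ exists, the primitive roots are exactly $h^k$ with $1\le k\le p-1$ and $(k,p-1)=1$.

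The only step with real content is the existence of an element of order $p-1$, that is, cyclicity. I would handle it by the root-counting argument above rather than invoking the structure theorem for finite abelian groups. The case $p=2$ is trivial, since $1$ is a primitive root and $\varphi(1)=1$.
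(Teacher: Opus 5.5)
Your argument is correct and complete. The paper gives no proof of its own here, only a citation to Apostol (Theorem~10.4), and what you wrote is the standard textbook proof: the root bound for $X^d-1$ over the field $\Z/p\Z$, the dichotomy $\psi(d)\in\{0,\varphi(d)\}$, and Gauss's identity $\sum_{d\mid n}\varphi(d)=n$.
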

\begin{proof}
    See e.g.~\cite[Theorem~10.4]{Apostol1976}.
\end{proof}

From Lemma~\ref{simplelem}, we have that $p-\varphi(p-1)$ residue classes modulo $p$ are not primitive roots. So, with a view to count the residue classes which do not correspond to primitive roots for all $p\leq x$, we also state the following simple lemma.

\begin{lemma}\label{fanlem}
    There exists a constant $c>0$ such that
    \begin{equation*}
        \prod_{p\leq x}\frac{p-\varphi(p-1)}{p}=\exp\left(-(c+o(1))\frac{x}{\log x}\right).
    \end{equation*}
\end{lemma}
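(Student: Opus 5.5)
Take logarithms and write
\begin{equation*}
    \log\prod_{p\leq x}\frac{p-\varphi(p-1)}{p}=\sum_{p\leq x}\log\left(1-\frac{\varphi(p-1)}{p}\right).
\end{equation*}
We must show this sum equals $-(c+o(1))\,x/\log x$ for some constant $c>0$. Since $\varphi(p-1)/p\in(0,1)$ for $p\geq 3$, each summand is a finite negative number. For $p=2$ the factor is $(2-1)/2=1/2$, which is harmless. Set $f(t)=-\log(1-t)$, a continuous increasing function on $[0,1)$. The claim then reduces to an equidistribution statement: the values $\varphi(p-1)/p$, or equivalently $\varphi(p-1)/(p-1)$, possess a limiting distribution over primes, and the average of $f$ against that distribution is finite. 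One then takes $c$ to be this average.

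\textbf{Limiting distribution.} Write
\begin{equation*}
    \frac{\varphi(p-1)}{p-1}=\prod_{q\mid p-1}\left(1-\frac1q\right).
\end{equation*}
This is a multiplicative function of $p-1$ supported on primes $q$. Its distribution over primes is governed by the Bombieri--Vinogradov theorem (or Siegel--Walfisz suffices), together with the Erd\H{o}s--Wintner/Kubilius argument for shifted primes. Concretely, I would truncate at $q\leq z$ and set
\begin{equation*}
    h_z(p)=\prod_{q\mid p-1,\, q\leq z}\left(1-\frac1q\right).
\end{equation*}
By Siegel--Walfisz, the vector of divisibility conditions $(q\mid p-1)_{q\leq z}$ is equidistributed with the independent probabilities $1/(q-1)$. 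Hence $\frac{1}{\pi(x)}\sum_{p\leq x}F(h_z(p))$ converges to $\mathbb{E}[F(h_z)]$ for any continuous $F$. The tail primes $q>z$ change $\log(\varphi(p-1)/(p-1))$ by $\sum_{q\mid p-1,\,q>z}1/q$. On average over $p$, Brun--Titchmarsh bounds this by $\sum_{q>z}\frac{1}{q(q-1)}\ll 1/z$. This gives convergence in distribution, and so $\frac{1}{\pi(x)}\sum_{p\leq x}\min(f(\varphi(p-1)/p),K)\to c_K$ for each cap $K$.

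\textbf{Uniform integrability (main obstacle).} The function $f$ is unbounded near $t=1$, but $\varphi(p-1)/p\leq 1/2$ always, since $p-1$ is even for $p>2$. Therefore $f(\varphi(p-1)/p)\leq\log 2$, and $f$ is bounded and continuous on the relevant range $[0,1/2]$. The truncation at $K$ is therefore unnecessary, and we obtain
\begin{equation*}
    \sum_{p\leq x}f\left(\frac{\varphi(p-1)}{p}\right)=(c+o(1))\,\pi(x),
\end{equation*}
where $c=\mathbb{E}[f(\cdot)]$. The passage from $\varphi(p-1)/p$ to $\varphi(p-1)/(p-1)$ costs $O(1/p)$ inside the Lipschitz function $f$, hence $o(\pi(x))$ in total.

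\textbf{Positivity and conclusion.} We have $c>0$ because $f(t)\geq t$ and
\begin{equation*}
    \frac{1}{\pi(x)}\sum_{p\leq x}\frac{\varphi(p-1)}{p-1}\to\prod_q\left(1-\frac{1}{q(q-1)}\right)>0,
\end{equation*}
the Artin constant, which follows from the same Siegel--Walfisz computation. Finally, $\pi(x)\sim x/\log x$ by the prime number theorem, and exponentiating yields the lemma. The only delicate step is the distributional limit, and the boundedness of $f$ on $[0,1/2]$ is what makes it straightforward.
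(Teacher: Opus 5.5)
Your argument is correct, and it takes a different route from the paper only in the sense that the paper gives no argument at all here: it simply cites \cite[Lemma~5.1]{Fan2025}. What you supply is a self-contained proof. The bound $\varphi(p-1)/p\le 1/2$ (as $2\mid p-1$) makes $f(t)=-\log(1-t)$ bounded by $\log 2$ and $2$-Lipschitz on the relevant range. That reduces the lemma to the existence of the prime average of a bounded Lipschitz function of $\varphi(p-1)/(p-1)$, which you obtain from the truncation $h_z$ (the prime number theorem for the fixed modulus $\prod_{q\le z}q$) plus an $L^1$ tail estimate. Positivity of $c$ comes from $f(t)\ge t$ and the Artin-constant mean.

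Two details are worth tightening. The Brun--Titchmarsh bound $\pi(x;q,1)\le 2x/(\varphi(q)\log(x/q))$ gives $\ll x/(q\log x)$ only for $q\le x^{1-\delta}$. For $q>x^{1/2}$, say, use instead that $p-1<x$ has at most one such prime factor, contributing $O(x^{-1/2})$ per prime. Also, rather than invoking ``convergence in distribution'', which presupposes a limit law, observe directly that the $\limsup$ and $\liminf$ of $\frac{1}{\pi(x)}\sum_{p\le x}f(\varphi(p-1)/p)$ both lie within $O(1/z)$ of $\mathbb{E}f(h_z)$ for every $z$, hence coincide.

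If you prefer to avoid the probabilistic model altogether, the same bound $t\le 1/2$ lets you expand $f(t)=\sum_{k\ge1}t^k/k$ with tail at most $2^{-K}$. Each moment $\frac{1}{\pi(x)}\sum_{p\le x}(\varphi(p-1)/(p-1))^k$ is then the mean of a multiplicative function over shifted primes, tending to $c_k=\prod_q\bigl(1-\frac{1-(1-1/q)^k}{q-1}\bigr)$. This gives the explicit value $c=\sum_{k\ge1}c_k/k$, the logarithmic analogue of Vaughan's constant $\lambda=\sum_{h\ge1}c_h$ from the introduction. Your route is conceptually cleaner; the moment route yields an explicit formula for $c$.
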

\begin{proof}
    See~\cite[Lemma~5.1]{Fan2025}.
\end{proof}

We now prove Theorem~\ref{unconthm}.

\begin{proof}[Proof of Theorem~\ref{unconthm}]
    Throughout we may assume that $\Phi(x)=o(\log x)$ since the exceptional set in the theorem will be even smaller for faster growing $\Phi$. To begin with, we define
    \begin{equation*}
        P_N:=\prod_{p\leq\Phi(N)}p
    \end{equation*}
    to be the product of all primes less than or equal to $\Phi(N)$, and
    \begin{equation*}
        \gamma(N):=\prod_{p\leq\Phi(N)}\frac{p-\varphi(p-1)}{p}
    \end{equation*}
    to be the product appearing in Lemma~\ref{fanlem} for $x=\Phi(N)$. Our goal will be to show
    \begin{equation*}
        \#\{g\in[M,M+N]:p_g>\Phi(N)\}=o(N).
    \end{equation*}
    Now, by Lemma~\ref{simplelem} there are $p-\varphi(p-1)$ residue classes which are \emph{not} primitive roots modulo $p$. Therefore, by the Chinese remainder theorem, there are
    \begin{equation*}
        P_N\gamma(N)=\prod_{p\leq\Phi(N)}(p-\varphi(p-1))
    \end{equation*}
    residue classes modulo $P_N$ which are not primitive roots modulo any prime ${p\leq \Phi(N)}$. If $B(N)$ denotes this set of residue classes (so $\#B(N)=P_N\gamma(N)$) then
    \begin{align}
        \#\{g\in[M,M+N]:p_g>\Phi(N)\}&=\sum_{b\in B(N)}\#\{g\in[M,M+N]:g\equiv b\ (\text{mod}\ P_N)\}\notag\\
        &\ll\sum_{b\in B(N)}\left(\frac{N}{P_N}+1\right)\notag\\
        &= \#B(N)\left(\frac{N}{P_N}+1\right)\notag\\
        &=N\gamma(N)+P_N\gamma(N)\label{BNbound}.
    \end{align}
    It thus suffices to show that $N\gamma(N)+P_N\gamma(N)=o(N)$. Firstly, by Lemma~\ref{fanlem}
    \begin{equation*}
        N\gamma(N)=N\exp\left(-(c+o(1))\frac{\Phi(N)}{\log \Phi(N)}\right)=o(N),
    \end{equation*}
    since $\Phi(N)\to\infty$.
    For the second term in~\eqref{BNbound}, we begin by noting that
    \begin{equation*}
        \log(P_N\gamma(N))\leq \log P_N=\sum_{p\leq\Phi(N)}\log p\ll \Phi(N).
    \end{equation*}
    Therefore,
    \begin{equation*}
        P_N\gamma(N)\leq e^{O(\Phi(N))}=o(N),
    \end{equation*}
    where we have made use of our initial assumption $\Phi(N)=o(\log N)$. This completes the proof.
\end{proof}

\section*{Acknowledgements}
Both authors are grateful to the Max Planck Institute for Mathematics in Bonn
for its hospitality and financial support. The second author would like to acknowledge the Indian Institute of Technology (IIT) Palakkad in Kerala for providing an excellent atmosphere to work.

\printbibliography

@article {moree2012,
    AUTHOR = {Moree, P.},
     TITLE = {Artin's primitive root conjecture---a survey},
   JOURNAL = {Integers},
  FJOURNAL = {Integers},
    VOLUME = {12},
      YEAR = {2012},
    NUMBER = {6},
     PAGES = {1305--1416}
}

@article {hooley1967,
    AUTHOR = {Hooley, C.},
     TITLE = {On {A}rtin's conjecture},
   JOURNAL = {J. Reine Angew. Math.},
  FJOURNAL = {Journal f\"ur die Reine und Angewandte Mathematik. [Crelle's
              Journal]},
    VOLUME = {225},
      YEAR = {1967},
     PAGES = {209--220}
}

@article {martin1997,
    AUTHOR = {Martin, G.},
     TITLE = {The least prime primitive root and the shifted sieve},
   JOURNAL = {Acta Arith.},
  FJOURNAL = {Acta Arithmetica},
    VOLUME = {80},
      YEAR = {1997},
    NUMBER = {3},
     PAGES = {277--288}
}

@article {Shoup1992,
    AUTHOR = {Shoup, V.},
     TITLE = {Searching for primitive roots in finite fields},
   JOURNAL = {Math. Comp.},
  FJOURNAL = {Mathematics of Computation},
    VOLUME = {58},
      YEAR = {1992},
    NUMBER = {197},
     PAGES = {369--380}
}

@article {Fan2025,
    AUTHOR = {Fan, K. and Pollack, P.},
     TITLE = {Counting primes with a given primitive root, uniformly},
   JOURNAL = {Mathematika},
  FJOURNAL = {Mathematika. A Journal of Pure and Applied Mathematics},
    VOLUME = {71},
      YEAR = {2025},
    NUMBER = {4},
     PAGES = {Paper No. e70055, 30}
}

@article {gallagher1971,
    AUTHOR = {Gallagher, P. X.},
     TITLE = {A larger sieve},
   JOURNAL = {Acta Arith.},
  FJOURNAL = {Polska Akademia Nauk. Instytut Matematyczny. Acta Arithmetica},
    VOLUME = {18},
      YEAR = {1971},
     PAGES = {77--81}
}

@article {vaughan1973,
    AUTHOR = {Vaughan, R. C.},
     TITLE = {Some applications of {M}ontgomery's sieve},
   JOURNAL = {J. Number Theory},
  FJOURNAL = {Journal of Number Theory},
    VOLUME = {5},
      YEAR = {1973},
     PAGES = {64--79}
}

@article {gallagher1967,
    AUTHOR = {Gallagher, P. X.},
     TITLE = {The large sieve},
   JOURNAL = {Mathematika},
  FJOURNAL = {Mathematika. A Journal of Pure and Applied Mathematics},
    VOLUME = {14},
      YEAR = {1967},
     PAGES = {14--20}
}

@book {Apostol1976,
    AUTHOR = {Apostol, T. M.},
     TITLE = {{I}ntroduction to {A}nalytic {A}umber {T}heory},
    SERIES = {Undergraduate Texts in Mathematics},
 PUBLISHER = {Springer-Verlag, New York-Heidelberg},
      YEAR = {1976},
}

@article {burgess1962,
    AUTHOR = {Burgess, D. A.},
     TITLE = {On character sums and primitive roots},
   JOURNAL = {Proc. London Math. Soc. (3)},
  FJOURNAL = {Proceedings of the London Mathematical Society. Third Series},
    VOLUME = {12},
      YEAR = {1962},
     PAGES = {179--192},
}

@article{martin1998,
    AUTHOR = {Martin, G.},
     TITLE = {Uniform bounds for the least almost-prime primitive root},
   JOURNAL = {Mathematika},
  FJOURNAL = {Mathematika. A Journal of Pure and Applied Mathematics},
    VOLUME = {45},
      YEAR = {1998},
    NUMBER = {1},
     PAGES = {191--207}
}

@article {croot2004,
    AUTHOR = {Croot, III, E. S. and Elsholtz, C.},
     TITLE = {On variants of the larger sieve},
   JOURNAL = {Acta Math. Hungar.},
  FJOURNAL = {Acta Mathematica Hungarica},
    VOLUME = {103},
      YEAR = {2004},
    NUMBER = {3},
     PAGES = {243--254},
}

@article {shao2016,
    AUTHOR = {Shao, X.},
     TITLE = {On an inverse ternary {G}oldbach problem},
   JOURNAL = {Amer. J. Math.},
  FJOURNAL = {American Journal of Mathematics},
    VOLUME = {138},
      YEAR = {2016},
    NUMBER = {5},
     PAGES = {1167--1191}
}

@book {montyvaughan2007,
    AUTHOR = {Montgomery, H. L. and Vaughan, R. C.},
     TITLE = {Multiplicative number theory. {I}. {C}lassical theory},
    SERIES = {Cambridge Studies in Advanced Mathematics},
    VOLUME = {97},
 PUBLISHER = {Cambridge University Press, Cambridge},
      YEAR = {2007}
}
\end{document}